\newtheorem{theorem}{Theorem}[section]
\newtheorem{lemma}[theorem]{Lemma}
\newtheorem{proposition}[theorem]{Proposition}
\newtheorem{remark}[theorem]{Remark}
\theoremstyle{definition}
\numberwithin{equation}{section}
\newcommand{\R}{\mathbb R}
\newcommand{\Z}{\mathbb Z}
\begin{document}

\title [Entropy dimension for Deterministic Walks in Random Sceneries]
{Entropy dimension for Deterministic Walks in Random Sceneries}

\author [Dou Dou and Kyewon Koh Park]{Dou Dou and Kyewon Koh Park}

\address{Department of Mathematics, Nanjing University,
Nanjing, Jiangsu, 210093, P.R. China} \email{doumath@163.com}

\address{Center for Mathematical Challenges, Korea Institute for Advanced Study, Seoul 130-722, Korea}
\email{kkpark@kias.re.kr}

\subjclass[2010]{Primary: 37A35, 37A05, 54H20}
\thanks{}

\keywords {entropy dimension, complexity, deterministic walk,
sub-exponential growth}

\begin{abstract}
Entropy dimension is an entropy-type quantity which takes values in $[0,1]$ and classifies different levels of intermediate growth rate of complexity for dynamical systems.
In this paper, we consider the complexity of skew products of irrational rotations with Bernoulli systems, which can be viewed as deterministic walks in random sceneries,
and show that this class of models can have any given entropy dimension by choosing suitable rotations for the base system.
\end{abstract}

\maketitle

\markboth{D. Dou and K.K. Park}{Entropy dimension for DWRS}

\section{Introduction}
Complexity for dynamical systems describes the growth rate of orbits under the iteration of the actions. We measure the complexity
via the iterated partitions in a probability measure space and via the iterated open covers in a topological setting. If a system has positive entropy,
then it has exponential growth rate. Positive entropy systems have been studied and much understood in recent decades and they are known to be chaotic and unpredictable.
Hence the word complexity is used mostly for entropy zero systems. Many examples of polynomial growth rate, such as irrational rotations, adding machines, interval exchange
maps and some billiards on polygons, are well-known. However the systems of
sub-exponential but super-polynomial growth rate (called intermediate growth rate) are not well understood yet, and hence are wide open to further development.

In the study of more general group actions, for example, $\Z^2-$systems, the sub-exponential growth arises
naturally and some of their properties have been investigated in \cite{Park1,Park2} and also in \cite{JLP2}.
The notions of slow entropy or entropy dimension have been introduced to distinguish different levels of intermediate growth rate of
zero entropy systems (\cite{KT,Ca,FP,H}). 
For $\Z$-actions, several definitions of entropy dimensions for topological and measurable dynamical systems are introduced and investigated in \cite{Ca,FP,DHP1,DHP2,ADP,DP}.

In \cite{C}, a class of uniformly recurrent infinite words with given intermediate complexity was constructed and it was shown in \cite{ADP} that the systems generated by the words are uniquely ergodic. Moreover it was shown in \cite{ADP} that
entropy dimension does not have the property of variational principle.
In \cite{DHP1,DHP2}, systems of uniform entropy dimension (u.d., for short) were constructed. They have the property that all finite open covers have the same topological entropy dimension which corresponds to topological property u.p.e. or all finite measurable partitions have
the same metric entropy dimension which corresponds to the metric property of K-mixing. Since the study of systems with intermediate complexity is still at the beginning stage, we have many interesting and basic questions. Besides the properties mentioned above, very little is known in the way of general results and the developments have been focused on examples.
Some examples related to intermediate complexity can be found in \cite{JLP, KP}, but all of theses examples are via constructive methods to have desired properties. Hence we would like to provide with examples of
intermediate growth rate out of more familiar models.

Aaronson showed the relative complexity of intermediate growth rate for a class of random walks in random sceneries (\cite{A}). However the model itself has positive entropy (exponential growth rate) since the base is Bernoulli. We consider a class of models which we call deterministic walks in random sceneries (DWRS, for short).
The deterministic walks are chosen to be the irrational rotations of the unit circle $\mathbf{T}$ and the random sceneries are chosen to be the orbits of Bernoulli systems (or more generally, positive entropy systems).
One can refer to \cite{AK,Av,CK} for DWRS's on $\mathbb{Z}$.

In our DWRS models, the unit circle of the base is divided into two subintervals of the same length. If after a rotation, a base point lies in the first interval, it will walk forward along its scenery and walk backward otherwise. Due to the construction, at time $n$, the point can only walk $o(n)$ distance away from the starting point along the orbit of the scenery system, hence the entropy should be zero. The scenery supplies the ``randomness'' for this skew-product system and the base controls the growth rate of the complexity.
By choosing suitable irrational rotation number $\alpha$ of the base, we will show that our DWRS model has intermediate growth rate. More precisely, for any $\tau\in[0,1]$, there exists $\alpha$ such that the skew-product system has the upper entropy dimension $\tau$.

We should notice that there are several ways to give the definitions of entropy dimensions. In this paper, we proceed with those appeared in \cite{ADP,DHP1,DHP2,DP} via entropy generating sequences. Due to the choice of taking $\limsup$ or $\liminf$ in the definitions of entropy dimensions, we have upper entropy dimensions and lower entropy dimensions. It is shown in \cite{DHP1,DHP2} that both of them are isomorphism invariants. However it is not hard to construct non-isomorphic systems of the same upper entropy dimension. In most of constructive examples we can increase upper dimension by adding more independencies and also decrease lower dimension by adding more repetitions. In fact making the upper and lower dimensions agree demands more careful choices of parameters at the successive steps (\cite{FP,DP,DHP2}). We remark that this is unlike computing the entropy of a system, which can never increase in longer blocks(names). When we consider factor maps between systems, the so called localization theory in dynamics or the u.d. property, the upper entropy dimension appears to carry more meaning (\cite{DHP1,DHP2}). Hence in this paper, when we talk about entropy dimension, it refers to the upper entropy dimension.

The paper is organized as follows. In section 2, we will give the definitions of entropy dimensions via entropy generating sequences in both
topological settings and measure-theoretic settings. Together with a brief review on
the best approximation and continued fraction expansion of irrationals, we will also introduce the construction and some preliminaries of the DWRS models. In section 3, we will discuss some recurrence properties of the irrational rotation and in section 4 and 5,
we will show the existence of any given topological and metric entropy dimensions by choosing the rotations properly.


\section{Preliminaries}

\subsection{Entropy dimension}

Let $S=\{s_1<s_2<\cdots\}$ be an increasing sequence of integers. We define the {\it upper dimension} and the {\it lower dimension} of the sequence $S$ by
$$\overline{D}(S)=\limsup_{n\rightarrow \infty}\frac{\log n}{\log s_n}$$ and $$\underline{D}(S)=\liminf_{n\rightarrow \infty}\frac{\log n}{\log s_n}$$
respectively. Equivalently,
\begin{align*}
  \overline{D}(S)=\inf \{\tau\ge 0:\limsup_{n\rightarrow \infty}\frac{n}{(s_n)^{\tau}}=0\}=\sup
\{\tau\ge 0:\limsup_{n\rightarrow \infty}\frac{n}{(s_n)^{\tau}}=\infty \}
\end{align*}
and
\begin{align*}
  \underline{D}(S)=\inf \{\tau\ge 0:\liminf_{n\rightarrow \infty}\frac{n}{(s_n)^{\tau}}=0\}=\sup
\{\tau\ge 0:\liminf_{n\rightarrow \infty}\frac{n}{(s_n)^{\tau}}=\infty \}.
\end{align*}
Clearly $0\le \underline{D}(S)\le \overline{D}(S)\le 1$. When
$\overline{D}(S)=\underline{D}(S)=\tau$, we say the sequence $S$ has
dimension $\tau$. For example, the dimension of the sequence $\{1,4,9,\cdots,n^2,\cdots\}$ is $\frac{1}{2}$ and
more general, the dimension of the sequence $\{n^t\}$ ($t$ is a positive integer) is $\frac{1}{t}$.

By a topological dynamical system (TDS, for short) we mean a pair $(X,T)$ where $X$ is a compact metric space and $T$ is a continuous onto self-map on $X$.
Let $(X,T)$ be a TDS and $\mathcal{U}$ be a finite open cover of $X$. We say an increasing sequence of positive integers $S=\{s_1<s_2<\cdots\}$ is an {\it entropy generating sequence} of $\mathcal{U}$ if $$\liminf_{n\rightarrow\infty}\frac{1}{n}\log N(\bigvee_{i=1}^nT^{-s_i}\mathcal{U})>0,$$
where $N(\mathcal V)$ denotes the minimal cardinality of subcovers of the open cover $\mathcal V$ of $X$.

Then the {\it (upper) entropy dimension} of $\mathcal{U}$ is defined by
$$\overline{D}(T, \mathcal{U})= \sup \{\overline{D}(S): S \text{ is an entropy generating sequence of }\mathcal{U}\}.$$
And the {\it (upper) topological entropy dimension} of the TDS $(X,T)$ is defined by
$$\overline{D}(X,T)=\sup_{\mathcal{U}} \overline{D}(T,\mathcal{U}),$$
where the supremum is taken over all the finite open covers $\mathcal{U}$ of $X$.

\begin{remark}\label{remark-1}
The following will be used for the computation of topological entropy dimension which appeared in \cite{DHP1}.
  \begin{enumerate}
    \item There are alternative equivalent definitions for $\overline{D}(T, \mathcal{U})$
    \begin{align*}
      \overline{D}(T, \mathcal{U})&= \inf \{\tau\ge 0:\limsup_{n\rightarrow \infty}\frac{1}{n^{\tau}}N(\bigvee_{i=0}^{n-1}T^{-i}\mathcal{U})=0\}\\
      &=\sup
\{\tau\ge 0:\limsup_{n\rightarrow \infty}\frac{1}{n^{\tau}}N(\bigvee_{i=0}^{n-1}T^{-i}\mathcal{U})=\infty \}.
    \end{align*}
    \item If $\mathcal{U}$ is a generating open cover, i.e. $\lim\limits_{n\rightarrow\infty}{\rm diam}(\bigvee_{i=0}^{n-1}T^{-i}\mathcal{U})=0$,
    then \begin{align*}
      \overline{D}(X,T)=\overline{D}(T,\mathcal{U}).
    \end{align*}
  \end{enumerate}
\end{remark}


Similarly, we can define entropy dimension for measurable dynamical systems. By a measurable dynamical system (MDS, for short), we mean a quadruple $(X,\mathcal{B}, \mu, T)$,
where $(X,\mathcal {B},\mu)$ is a probability space and $T$ is a measurable transformation on $X$ that preserves $\mu$.
Let $(X,\mathcal{B}, \mu, T)$ be a MDS and $\mathcal{P}$ be a finite measurable partition of $X$.
An increasing sequence of positive integers $S=\{s_1<s_2<\cdots\}$ is called an {\it entropy generating sequence} of $\mathcal{P}$ (w.r.t. $\mu$) if $$\liminf_{n\rightarrow\infty}\frac{1}{n}H_{\mu}(\bigvee_{i=1}^nT^{-s_i}\mathcal{P})>0.$$
Then the {\it (upper) entropy dimension} of $\mathcal{P}$ is defined by
$$\overline{D}_{\mu}(T, \mathcal{P})= \sup \{\overline{D}(S): S \text{ is an entropy generating sequence of }\mathcal{P}\text{ (w.r.t }\mu \text{) }\}.$$
The (upper) entropy dimension of the MDS $(X,\mathcal{B},\mu,T)$ is defined by
$$\overline{D}_{\mu}(X,T)=\sup_{\mathcal{P}} \overline{D}_{\mu}(T,\mathcal{P}),$$
where the supremum is taken over all the finite measurable partitions $\mathcal{P}$ of $X$. It is shown in \cite{DHP2}, $\overline{D}_{\mu}(X,T)$ equals
$\overline{D}_{\mu}(T,\mathcal{P})$ when $\mathcal{P}$ is a generating partition.

\subsection{DWRS}

Let $\mathbf{T}=\R/\Z$ be the unit circle and $\alpha\in (0,1)$ be an irrational number. Let $(\mathbf{T},\alpha)$ be the rotation by $\alpha$ on the unit circle. For any $\theta\in \mathbf{T}$, we
define a bi-infinite $\{-1,1\}$ sequence
$z=z(\theta)=(\cdots,z_{-1},z_0,z_1\cdots)$ by
$$z_n=\begin{cases} 1\, &\text{if } \theta+n\alpha\, \mod\, 1\in[0,\frac{1}{2})\\ -1 \, &\text{if }
\theta+n\alpha \, \mod\, 1\in[\frac{1}{2},1) \end{cases}.$$
Let $Z=cl\{\sigma^nz(0): n\in \Z\}$, where $\sigma$ is the left shift map. Then $(Z,\sigma)$ forms a TDS which is called
a Sturmian flow of type $(\alpha,\frac{1}{2})$ (see \cite[(2.53)]{V}). Since $\alpha$ and $\frac{1}{2}$ are rationally independent, $(Z,\sigma)$ is minimal and uniquely ergodic. Moreover, $(Z,\sigma)$ is an almost 1-1 extension of the irrational rotation $(\mathbf{T},\alpha)$. Let $\varphi$ be the factor map between $(Z,\sigma)$ and $(\mathbf{T},\alpha)$. It is known that $\varphi^{-1}(\theta)$ has exact two points if $\theta\in Orb(0)\cup Orb(\frac{1}{2})$ and $\varphi^{-1}(\theta)=z(\theta)$ otherwise.

Let $\mathcal{B}_{\mathbf{T}}$ and $\mathcal{B}_{Z}$ be the Borel $\sigma$-algebra
of $\mathbf{T}$ and $Z$ respectively. Let ${\bf m}$ and $\mu$ be the unique ergodic invariant probability measures of $(\mathbf{T},\alpha)$ and $(Z,\sigma)$ respectively.
Then ${\bf m}$ is the Haar-Lebesgue measure of $\mathbf{T}$ and moreover,
from the measure-theoretic point of view, the MDS's $(\mathbf{T}, \mathcal{B}_{\mathbf{T}}, {\bf m},\alpha)$ and
$(Z, \mathcal{B}_{Z}, \mu, \sigma)$ are conjugate.

Sometimes we will not distinguish $\mathbf{T}$ and $Z$ in this paper.

Let $(Y,S)$ be an invertible TDS. We define the map $T: Z\times Y\rightarrow Z\times Y$
by
$$T(z,y)=(\sigma z,S^{z_0}y),$$ for $z=(\cdots,z_{-1}z_0z_1\cdots)\in Z$. That is, for any point $(z,y)\in Z\times
Y$, if $z_0$ is $1$, we then push $y$ forward to $Sy$, otherwise we
pull $y$ backward to $S^{-1}y$. Hence we have constructed a skew-product system $(Z\times Y, T)$.
For measure-theoretic case, for an invertible MDS $(Y,\mathcal {D},\nu,S)$,
this skew-product system can also be viewed as a deterministic walk along the orbits
of $(Y,\mathcal {D},\nu,S)$.

For simplicity, we let $\mathfrak{Y}$ denote either the invertible TDS $(Y,S)$ or the invertible MDS $(Y,\mathcal {D},\nu,S)$.
And denote by $\mathfrak{X}=\mathfrak{X}_{\alpha,\mathfrak{Y}}$ either the TDS $(Z\times Y, T)$ or the MDS $(Z\times Y,\mathcal{B}_Z\otimes \mathcal{D},\mu\times \nu, T)$ and call it a deterministic walk with rotation $\alpha$ along $\mathfrak{Y}$.
Especially, when $\mathfrak{Y}$ is a Bernoulli system (or a positive entropy system), we call $\mathfrak{X}$ a DWRS.

\begin{remark}\label{conjugate}
  Since $(\mathbf{T}, \mathcal{B}_{\mathbf{T}}, {\bf m},\alpha)$ and $(Z, \mathcal{B}_{Z}, \mu, \sigma)$ are conjugate, sometimes $\mathfrak{X}$ is also viewed as
$(\mathbf{T}\times Y,\mathcal{B}_{\mathbf{T}}\otimes \mathcal{D},{\bf m}\times \nu, T)$. In this case, $T$ also denotes the corresponding skew-product map on $\mathbf{T}\times Y$.
\end{remark}
It is easy to see that the following hold and we omit the proofs.

\begin{proposition}
$\mathfrak{X}$ is minimal (respectively, transitive or ergodic) if and only if $\mathfrak{Y}$ is.
\end{proposition}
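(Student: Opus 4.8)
The plan is to treat the three equivalences in parallel, proving each ``only if'' by projecting to the fibre and each ``if'' by promoting the dynamics of $\mathfrak{Y}$ through the skew structure. Throughout I would write $\xi_n(z)=z_0+z_1+\cdots+z_{n-1}$ (with the analogous convention for $n<0$), so that $T^n(z,y)=(\sigma^n z,S^{\xi_n(z)}y)$, and let $\pi_Y\colon Z\times Y\to Y$ be the projection. For the \textbf{only if} directions, the observation is that $\pi_Y(\mathcal{O}_T(z,y))\subseteq \mathcal{O}_S(y)$ for every $(z,y)$, since the fibre coordinate of $T^n(z,y)$ lies on the $S$-orbit of $y$. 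Hence if $(z,y)$ has a dense $T$-orbit then $y$ has a dense $S$-orbit, giving transitivity of $\mathfrak{Y}$; applying this to an arbitrary $y$ paired with any $z$ upgrades minimality of $\mathfrak{X}$ to minimality of $\mathfrak{Y}$. For the measure case, if $A\subseteq Y$ is $S$-invariant then $S^{\pm1}A=A$ forces $Z\times A$ to be $T$-invariant with $(\mu\times\nu)(Z\times A)=\nu(A)$, so ergodicity of $\mathfrak{X}$ yields $\nu(A)\in\{0,1\}$, i.e. ergodicity of $\mathfrak{Y}$.

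The \textbf{if} directions are where the arithmetic of the base enters. The device I would use is the commuting $\Z$-action $R_k(z,y)=(z,S^k y)$, which satisfies $R_k\circ T=T\circ R_k$ because $S$ commutes with its own powers. The idea is that $\mathfrak{Y}$ controls each fibre $\{z\}\times Y$ while the maps $R_k$ let one move along fibres, so the whole space is reached once one knows that the walk $\xi_n$ realises every integer fibre-shift along orbits that simultaneously return in the base. Concretely, the key input I would isolate --- and which is exactly the recurrence analysis carried out in Section~3 --- is that $\xi_n$ is recurrent (note $\int z_0\,d\mu=\int_{\mathbf T}(\mathbf 1_{[0,1/2)}-\mathbf 1_{[1/2,1)})\,d{\bf m}=0$, so recurrence is an Atkinson-type consequence of mean zero) and that moreover every integer is an essential value, so that for each $z$, each neighbourhood $U\ni z$ and each $k\in\Z$ there are times $n$ with $\sigma^n z\in U$ and $\xi_n(z)=k$.

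Granting this, minimality of $\mathfrak{X}$ would follow as follows. Let $W=\overline{\mathcal{O}_T(z_0,y_0)}$; its projection to $Z$ is closed, $\sigma$-invariant and nonempty, hence all of $Z$ by minimality of the Sturmian base, so every fibre $W_z=\{y:(z,y)\in W\}$ is nonempty. Using the essential-value property I would show $W$ is $R_1$-invariant: from a point $(z,y)\in W$ realised along times $n_j$ with $\sigma^{n_j}z_0\to z$ and $S^{\xi_{n_j}(z_0)}y_0\to y$, the property produces nearby return times whose cocycle value is increased by one, whence the fibre images converge to $Sy$ and $(z,Sy)\in W$. Then each $W_z$ is a nonempty closed $S$-invariant set, so $W_z=Y$ by minimality of $\mathfrak{Y}$, giving $W=Z\times Y$. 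Transitivity is the same coordination argument but only needs a single transitive fibre point, so it is strictly easier. For ergodicity I would invoke the measurable counterpart, Schmidt's essential-value criterion: the cocycle having all of $\Z$ as essential values, together with ergodicity of $(Y,\nu,S)$, forces every $T$-invariant $F\in L^2(\mu\times\nu)$ to be fibrewise invariant and then constant.

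The main obstacle, and the only genuinely substantive point, is the essential-value statement. Recurrence ($\xi_n=0$ infinitely often) is immediate from mean zero, but showing that the balanced $\pm1$ walk over the rotation actually sweeps through every integer value along base-returns --- equivalently that the $\Z$-extension is regular with full essential-value group --- is where the continued-fraction and three-distance structure of $\alpha$ is needed. This is precisely the recurrence input developed in Section~3, and once it is available all three ``if'' directions reduce to the soft arguments above.
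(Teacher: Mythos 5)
The paper gives no proof to compare against (the authors simply declare the proposition "easy to see" and omit it), so your attempt must stand on its own. Your two "only if" halves are correct and complete, and the skeleton of the "if" halves --- the commuting fibre translations $R_k$, essential values acting trivially on invariant sections, and the projection of a minimal set onto the Sturmian base --- is the right one. But as written there is a genuine hole, located exactly at the point you yourself flag as "the only genuinely substantive point": the claim that the cocycle $\xi_n(z)=z_0+\cdots+z_{n-1}$ has every integer as an essential value, equivalently that the cylinder map $(\theta,k)\mapsto(\theta+\alpha,\,k+\mathbf{1}_{[0,1/2)}(\theta)-\mathbf{1}_{[1/2,1)}(\theta))$ on $\mathbf{T}\times\Z$ is conservative and ergodic for the product of $\bf m$ with counting measure. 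You assert this "is exactly the recurrence analysis carried out in Section~3", but it is not: Section~3 and Lemma~\ref{lem-recurrence} establish bounds on the range $M(\theta,n)$, $m(\theta,n)$ of the walk and exhibit a positive-measure set on which the walk climbs linearly along the times $iq_n$ --- statements about how far the walk travels, used for the entropy-dimension estimates, not statements pairing returns of the base with prescribed cocycle values. Nothing in the paper proves $E(\xi)=\Z$; that is the Conze--Keane theorem (reference \cite{CK}, valid for every irrational $\alpha$), and your proof is incomplete until you cite it or prove it. Note also that Atkinson-type recurrence from mean zero gives only $0\in E(\xi)$: a coboundary is recurrent with trivial essential value group and would make $\mathfrak{X}$ non-ergodic, so mean zero alone cannot carry any of the "if" directions.

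Two smaller repairs. For the topological statements you need a topological counterpart of the essential-value property, and the pointwise form you state ("for each $z$, each neighbourhood $U\ni z$ and each $k$\,\dots") is both stronger than what conservative ergodicity delivers and stronger than what you need: conservativity and ergodicity of the cylinder flow with respect to a fully supported ($\sigma$-finite) measure give a residual, full-measure set of $z$ for which $(z,0)$ has dense orbit in $Z\times\Z$, and since any minimal set $W\subseteq Z\times Y$ projects onto all of $Z$ you may choose the point $(z,y)\in W$ with $z$ in this residual set; the rest of your coordination argument then goes through and yields $\overline{\mathcal{O}_T(z,y)}\supseteq Z\times\overline{\mathcal{O}_S(y)}=Z\times Y$. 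In the ergodic case your appeal to Schmidt's criterion is correct once the essential-value input is supplied: $1\in E(\xi)$ forces the fibre functions $F(z,\cdot)$ of a $T$-invariant $F$ to be $S$-invariant, hence $\nu$-a.e.\ constant by ergodicity of $\mathfrak{Y}$, and the resulting function of $z$ alone is constant by ergodicity of the base.
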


\begin{proposition}
For every $\alpha\in\mathbf{T}$ and any system $\mathfrak{Y}$, the entropy of $\mathfrak{X}$ is zero.
\end{proposition}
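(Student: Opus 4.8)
The plan is to combine the explicit form of the iterates of $T$ with the sublinear growth of the deterministic walk. Writing $z=(\cdots z_{-1}z_0z_1\cdots)$, a direct induction gives
$$T^n(z,y)=\big(\sigma^n z,\,S^{W_n(z)}y\big),\qquad W_n(z)=\sum_{i=0}^{n-1}z_i,$$
so after $n$ steps the scenery coordinate has moved by the net displacement $W_n(z)$. Since $z_i=\mathbf{1}_{[0,1/2)}(\theta+i\alpha)-\mathbf{1}_{[1/2,1)}(\theta+i\alpha)$ for the base point $\theta$ corresponding to $z$, the displacement $W_n(z)$ is a Birkhoff sum over the rotation of a function of zero $\mathbf m$-integral. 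Because $(\mathbf T,\alpha)$ is uniquely ergodic, its Birkhoff averages converge to the integral uniformly in the starting point; approximating $\mathbf{1}_{[0,1/2)}$ from above and below by continuous functions and then applying a maximal estimate upgrades this to
$$R_n:=\max_{0\le i\le n}\ \sup_{z\in Z}\abs{W_i(z)}=o(n).$$
This is exactly the ``$o(n)$ distance'' remark of the introduction, and it drives the entire argument.

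For the topological case I would estimate the minimal $(n,\eps)$-spanning cardinality $r_n(\eps,T)$ of $(Z\times Y,T)$ in the max metric. First, for $\eps$ small enough that $\eps$-closeness in $Z$ forces agreement of the $0$-coordinate, two $(n,\eps)$-close base points share the block $z_0\cdots z_{n-1}$ and hence have identical walks, $W_i(z)=W_i(z')$ for $i\le n$. Now take an $(n,\eps)$-spanning set $F\subseteq Z$ for $(Z,\sigma)$; the Sturmian base has zero entropy, so $\tfrac1n\log\abs F\to0$. Independently take $G\subseteq Y$ that $\eps$-spans $Y$ in the two-sided metric $\rho(y,y')=\max_{\abs k\le R_n}d_Y(S^k y,S^k y')$; as $S$ is a homeomorphism of the compact space $Y$, $\abs G$ grows at most exponentially in the window length $2R_n+1$ with a rate $C(\eps)$ depending only on $\eps$, so $\tfrac1n\log\abs G\le \tfrac{2R_n+1}{n}\,C(\eps)\to0$. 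The point is that $F\times G$ is $(n,\eps)$-spanning for $T$: given $(z,y)$, choose $z'\in F$ and $y'\in G$ as above; then for each $i<n$ one has $d_Y\big(S^{W_i(z)}y,S^{W_i(z')}y'\big)=d_Y\big(S^{W_i}y,S^{W_i}y'\big)\le\rho(y,y')<\eps$ because $\abs{W_i}\le R_n$. Hence $\tfrac1n\log r_n(\eps,T)\le\tfrac1n\log\abs F+\tfrac1n\log\abs G\to0$, and letting $\eps\to0$ gives $h_{top}(Z\times Y,T)=0$.

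For the measurable case the same mechanism works with partitions. Let $\mathcal P_Y$ be a finite partition of $Y$ and let $\mathcal P_Z$ be a finite partition of $Z$ refining the $0$-coordinate partition, so that $\bigvee_{i=0}^{n-1}\sigma^{-i}\mathcal P_Z$ determines the symbols $z_0,\dots,z_{n-1}$; it suffices to show $h_{\mu\times\nu}(T,\mathcal P_Z\times\mathcal P_Y)=0$ for all such product partitions, since these generate the Borel structure of $Z\times Y$ and a standard approximation then controls an arbitrary finite $\mathcal Q$. Splitting the join into base and fibre parts,
$$\bigvee_{i=0}^{n-1}T^{-i}(\mathcal P_Z\times\mathcal P_Y)=\xi_n\vee\mathcal C_n,\qquad \xi_n=\Big(\bigvee_{i=0}^{n-1}\sigma^{-i}\mathcal P_Z\Big)\times\{Y\},\qquad \mathcal C_n=\bigvee_{i=0}^{n-1}T^{-i}\big(\{Z\}\times\mathcal P_Y\big).$$
Here $H_\mu(\xi_n)=O(\log n)$ because the Sturmian base has linear complexity. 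On each atom of $\xi_n$ the walk $(W_i)_{i<n}$ is constant with values in $[-R_n,R_n]$, so the trace of $\mathcal C_n$ there is coarser than $\bigvee_{\abs k\le R_n}S^{-k}\mathcal P_Y$; conditioning on $\xi_n$ (on whose atoms the conditional measure on $Y$ is $\nu$) and using subadditivity gives $H(\mathcal C_n\mid\xi_n)\le(2R_n+1)H_\nu(\mathcal P_Y)$. Therefore $H_{\mu\times\nu}\big(\bigvee_{i=0}^{n-1}T^{-i}(\mathcal P_Z\times\mathcal P_Y)\big)=H_\mu(\xi_n)+H(\mathcal C_n\mid\xi_n)=o(n)$, and the entropy of this partition vanishes.

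The main obstacle, and the only step where the base system truly matters, is the uniform sublinear bound $R_n=o(n)$; once it is in hand, the rest is bookkeeping of the product structure, the decisive structural fact being that $(n,\eps)$-closeness, respectively a common base atom, forces the two walks to coincide. A secondary subtlety is that $\mathbf{1}_{[0,1/2)}$ is discontinuous, which is why I invoke unique ergodicity of the rotation, rather than mere ergodicity, to obtain convergence of the Birkhoff averages that is uniform in $\theta$ and thus yields the maximal bound on $R_n$.
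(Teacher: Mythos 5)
The paper omits this proof (``It is easy to see\dots''), but your argument is exactly the mechanism the introduction alludes to: the displacement $W_n$ is uniformly $o(n)$, so the fibre motion contributes nothing exponential, and both your spanning-set bookkeeping and your conditional-entropy bookkeeping are correct. The one caveat is that your key step (unique ergodicity of the rotation, giving the uniform bound $R_n=o(n)$) requires $\alpha$ irrational --- which is the paper's standing assumption from Section 2.3 but not literally what the proposition asserts; this restriction is genuinely necessary, since for rational $\alpha$ (e.g.\ $\alpha=0$) the walk is ballistic and $\mathfrak{X}$ inherits the positive entropy of $S$.
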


Let $z=(\cdots,z_{-1}z_0z_1\cdots)\in Z$. For any $m,n\in \mathbb{N}$ with $m\le n$, denote by
$$\omega(z,m,n)=\sum_{i=m}^nz_i, \; \omega(z,1,n)=\omega(z,n).$$
For $\theta\in \mathbf{T}$, denote by
$$\omega (\theta,m,n)=\omega (z(\theta),m,n), \ \ \omega (\theta,n)=\omega (z(\theta),n)=\omega(\theta,1,n)$$
and
$$\omega(n)=\omega(0,n)=\omega (z(0),n).$$

If $\varphi(z)$ does not belong to the orbits of $0$ and $\frac{1}{2}$, then $\omega(z,k)=\omega(\varphi(z),k)$.
Otherwise, $|\omega(z,k)-\omega(\varphi(z),k)|\le 2$.

Denote by
$$M(\theta,n)=\max_{1\le\ell\le n} \omega(\theta,\ell),\ \ m(\theta,n)=\min_{1\le\ell\le n}\omega(\theta,\ell)$$ and
$$M(z,n)=\max_{1\le\ell\le n}\omega(z,\ell),\ \ m(z,n)=\min_{1\le\ell\le n} \omega(z,\ell).$$

If $\varphi(z)$ does not belong to the orbits of $0$ and $\frac{1}{2}$, then $M(z,n)=M(\varphi(z),n)$ and $m(z,n)=m(\varphi(z),n)$.
Otherwise, $|M(z,n)-M(\varphi(z),n)|\le 2$ and $|m(z,n)-m(\varphi(z),n)|\le 2$.

Let $\theta\in\mathbf{T}$ and $z=z(\theta)=(\cdots,z_{-1}z_0z_1\cdots)\in Z$.
Recall that $T(z,y)=(\sigma z,S^{z_0}y)$. Hence, for $i\ge 0$,
\begin{align}\label{6}
T^i(z,y)&=(\sigma^i z, S^{\omega(z,0,i-1)}y),\nonumber \\
T^{-i}(z,y)&=(\sigma^{-i} z, S^{-\omega(\sigma ^{-i}z,0,i-1)}y),
\end{align}
where $\omega(z,0,-1):=0$.
\subsection{The best approximation and continued fraction expansion of irrationals}
Fix an irrational number $\alpha\in (0,1)$. The growth of $\omega (n)$
depends on the recurrent properties of $\alpha$. Here we collect some basic facts on the continued fraction expansion of
irrationals (see Khinchin's classic book \cite{Kh} for details).

For a real number $\beta$, let $\|\beta\|=\min\{\{\beta\},1-\{\beta\}\}$, where $\{\beta\}$ denotes the
fractional part of $\beta$.

Let $[a_0;a_1,a_2,\cdots]$ be the continued fraction expansion of $\alpha$ and $\frac{p_n}{q_n}=[a_0;a_1,a_2,\cdots,a_n]$ be the continued fraction approximation of $\alpha$. Then
$$\frac{p_0}{q_0}<\frac{p_2}{q_2}<\cdots<\alpha<\cdots<\frac{p_3}{q_3}<\frac{p_1}{q_1}$$
and
\begin{equation}\label{4}
    q_n=a_nq_{n-1}+q_{n-2},
\end{equation}
where $q_{-1}=0$ and $q_{-2}=1$. Since $\alpha$ is chosen from $(0,1)$, we have that $a_0=0$ and $q_0=1$.
Moreover, $p_n,q_n$'s have the following properties:
\begin{equation}\label{1}
    \|q_n\alpha\|=\begin{cases} \{q_n\alpha\}\, &\text{if } n=2k, \\
1-\{q_n\alpha\} \, &\text{if } n=2k+1;
\end{cases}
\end{equation}

\begin{equation}\label{2}
    \|q_{n-1}\alpha\|<\|k\alpha\|,\forall k<q_n,k\neq q_{n-1};
\end{equation}

\begin{equation}\label{3}
    \frac{1}{q_n(q_n+q_{n+1})}\le (-1)^n(\alpha-\frac{p_n}{q_n})\le\frac{1}{q_nq_{n+1}}.
\end{equation}

\section{Some recurrence properties of $(\mathbf{T},\alpha)$}

Let $\alpha$ be a fixed irrational in $[0,1)$. In this section we will discuss the growth of $\omega (\theta,n)$ for $\theta\in \mathbf{T}$, which will be related
with the complexity of our DWRS models. Firstly, let's study the growth of $\omega(n)$. For convenience, we will identify $\mathbf{T}$ with the interval $[0,1)$.
When we talk about some point $\ell\alpha$, it should be understood as $\ell\alpha \mod 1$ due to the context. And when we talk about some interval, it should be understood
as an arc of $\mathbf{T}$.
For each $n\in\mathbb{N}$, let $Q_n=\{k\alpha: 1\le k\le q_n\}\subset\mathbf{T}$.

The following two propositions will show that the points in $Q_n$ have ``almost equi-distribution" properties on $\mathbf{T}$ with respect to any translation of the partition
$\{[0,\frac{1}{2}),$ $[\frac{1}{2},1)\}$.
\begin{proposition}\label{lemma-qn}
$$\omega(q_n)=\begin{cases}0\, &\text{ if } q_n=2r,\\ 1\,&\text{ if } q_n=2r+1 \text{ and } n=2k,\\ -1\, &\text{ if }q_n=2r+1 \text{ and }n=2k+1.
\end{cases}$$
\end{proposition}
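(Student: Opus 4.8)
The plan is to read $\omega(q_n)$ off the almost-symmetric distribution of the finite orbit $\{k\alpha \bmod 1 : 0\le k\le q_n\}$ about the point $\delta_n:=\{q_n\alpha\}$. Write $f(\theta)=1$ for $\theta\in[0,\tfrac12)$ and $f(\theta)=-1$ for $\theta\in[\tfrac12,1)$, so that $(\sigma^i z(0))_0=f(i\alpha)$ and hence $\omega(q_n)=\sum_{k=1}^{q_n}f(k\alpha)$. It is convenient to set $\Sigma=\sum_{k=0}^{q_n}f(k\alpha)=1+\omega(q_n)$ (using $f(0)=1$), so it suffices to show $\Sigma$ equals $1$, $2$, or $0$ in the three cases. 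First I would symmetrize: the map $k\mapsto q_n-k$ is an involution of $\{0,1,\dots,q_n\}$, and since $(q_n-k)\alpha\equiv\delta_n-k\alpha \pmod 1$, adding $\Sigma$ to its reversal gives $2\Sigma=\sum_{k=0}^{q_n}g(k\alpha)$, where $g(x)=f(x)+f(\delta_n-x)$.

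Next I would locate the support of $g$. Using \eqref{1}, for even $n$ the value $\delta_n=\|q_n\alpha\|$ sits just above $0$, and a direct check gives $g=+2$ on $(0,\|q_n\alpha\|)$, $g=-2$ on $(\tfrac12,\tfrac12+\|q_n\alpha\|)$, and $g=0$ elsewhere; for odd $n$ the value $\delta_n=1-\|q_n\alpha\|$ sits just below $1$, and $g=+2$ on $(\tfrac12-\|q_n\alpha\|,\tfrac12)$, $g=-2$ on $(1-\|q_n\alpha\|,1)$, and $g=0$ elsewhere. The two boundary indices $k=0,q_n$ I would evaluate by hand from $f(0)=1$ and $f(\delta_n)$: for even $n$ they give $g(0)+g(q_n\alpha)=4$, while for odd $n$ they give $0$.

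The heart of the matter is the localization step. By \eqref{2} together with $\|q_{n-1}\alpha\|>\|q_n\alpha\|$, every $k$ with $1\le k\le q_n-1$ satisfies $\|k\alpha\|>\|q_n\alpha\|$, so no such $k\alpha$ meets the two arcs adjacent to $0$; more generally any two distinct orbit points $k\alpha,k'\alpha$ with $0\le k,k'\le q_n$ are at circle-distance $\ge\|q_n\alpha\|$, with equality only for the pair $\{0,q_n\}$. Consequently each length-$\|q_n\alpha\|$ arc at $\tfrac12$ contains at most one orbit point. Whether that arc is actually hit is decided by the two parities. When $q_n=2r$ is even, the self-paired middle index satisfies $2(r\alpha)\equiv q_n\alpha\equiv\delta_n$, so $r\alpha$ differs from $\tfrac12 p_n$ by $\tfrac12\|q_n\alpha\|$; the classical identity $p_nq_{n-1}-p_{n-1}q_n=(-1)^{n-1}$ forces $p_n$ to be odd, which places $r\alpha$ within $\|q_n\alpha\|$ of $\tfrac12$, i.e. exactly into the active arc at $\tfrac12$. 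When $q_n$ is odd there is no self-paired index, and a lone orbit point in the arc would drag its distinct mirror partner $q_n-k$ into the same arc (since $\delta_n-k\alpha$ lands there too), contradicting the gap bound; hence the arc at $\tfrac12$ is empty.

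Finally I would tally $2\Sigma=\sum_k g(k\alpha)$. For even $n$ the boundary indices contribute $4$ and the arc at $\tfrac12$ contributes $-2$ precisely when occupied, so $\Sigma=2-c$ with $c=1$ for $q_n$ even and $c=0$ for $q_n$ odd, giving $\omega(q_n)=0$ or $1$; for odd $n$ the boundary indices contribute $0$ and the arc at $\tfrac12$ contributes $+2$ when occupied, so $\Sigma=c'$ with $c'=1$ for $q_n$ even and $c'=0$ for $q_n$ odd, giving $\omega(q_n)=0$ or $-1$. This matches the three stated cases. I expect the localization step to be the main obstacle: pinning down that the only surviving contributions come from the orbit point(s) nearest $0$ and nearest $\tfrac12$, and correctly deciding on which side of $\tfrac12$ the middle point $r\alpha$ falls. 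Both rest on the minimal-gap estimate coming from \eqref{2} and on the parity identity for $p_n,q_n$, rather than on any soft equidistribution statement.
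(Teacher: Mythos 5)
Your proof is correct and is essentially the paper's argument: both rest on the involution $k\mapsto q_n-k$, which exchanges $[0,\tfrac12)$ with the arc beyond $\tfrac12+\|q_n\alpha\|$ and preserves the length-$\|q_n\alpha\|$ arc at $\tfrac12$, together with the minimal-gap bound from \eqref{2} showing that arc holds at most one point of $Q_n$, occupied exactly when $q_n$ is even. Your symmetrized sum $2\Sigma=\sum_k g(k\alpha)$ is just a repackaging of the same pairing, though you do supply one detail the paper only asserts, namely that $p_nq_{n-1}-p_{n-1}q_n=(-1)^{n-1}$ forces $p_n$ odd and hence places $r\alpha$ at $\tfrac12\pm\tfrac12\|q_n\alpha\|$.
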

\begin{proof}
We will just prove the case $n=2k$ since the parallel argument holds for $n=2k+1$.

We note that $\|q_n\alpha\|=\{q_n\alpha\}$.

For $1\le\ell\le q_n-1$,
if $\ell\alpha\in (0,\frac{1}{2})$, then $-\ell\alpha\in
(\frac{1}{2},1)$. Hence $(q_n-\ell)\alpha\in
(\frac{1}{2}+\|q_n\alpha\|,1+\|q_n\alpha\|)$. Since no element in $Q_n$ belongs to $[0,\|q_n\alpha\|)$, we know that $(q_n-\ell)\alpha\in
(\frac{1}{2}+\|q_n\alpha\|,1)$. Likewise, if $\ell\alpha\in
(\frac{1}{2}+\|q_n\alpha\|,1)$, then
$(q_n-\ell)\alpha\in(0,\frac{1}{2})$. Moreover,
$\ell\alpha\in(\frac{1}{2},\frac{1}{2}+\|q_n\alpha\|)$ if and only
if $(q_n-\ell)\alpha\in(\frac{1}{2},\frac{1}{2}+\|q_n\alpha\|)$.

If $q_n=2r$, then $r\alpha=\frac{1}{2}+\frac{1}{2}||q_n\alpha||\in
(\frac{1}{2},\frac{1}{2}+\|q_n\alpha\|)$.  Note that $q_n\alpha\in [0,\frac{1}{2})$, hence $\omega(q_n)=0$.
If $q_n=2r+1$, then no element of the set
$Q_n$ belongs to
$(\frac{1}{2},\frac{1}{2}+\|q_n\alpha\|)$ since otherwise there will be two elements of $Q_n$ belonging to this interval, which contradicts to the fact that
$\frac{p_n}{q_n}$ is the best approximation of $\alpha$. Hence $\omega(q_n)=1$.
\end{proof}

\begin{proposition}\label{lemma-kq}
For $\theta\in \mathbf{T}$, the following hold.
\begin{enumerate}
\item If $\omega(q_n)=0$, then $\omega(\theta,q_n)=0\text{ or }\pm2$.
\item If $\omega(q_n)=\pm 1$, then $\omega(\theta,q_n)=\pm1\text{ or }\pm 3$.
\item If $q_n=2r+1$ and $kq_n<q_{n+1}$, then $|\omega(kq_n)|\le k$.
\end{enumerate}
\end{proposition}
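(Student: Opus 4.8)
The plan is to translate everything into a counting statement about how the finite orbit $Q_n=\{i\alpha:1\le i\le q_n\}$ distributes across the two halves of $\mathbf{T}$, and then to exploit the near reflection symmetry of $Q_n$ that already drove the proof of Proposition \ref{lemma-qn}.

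For (1) and (2), I would write $\omega(\theta,q_n)=2A(\theta)-q_n$, where $A(\theta)=\#\{1\le i\le q_n:\theta+i\alpha\in[0,\tfrac12)\}$ is the number of points of $Q_n$ lying in the half-circle $[-\theta,\tfrac12-\theta)$. In particular $\omega(\theta,q_n)\equiv q_n\pmod 2$, so $\omega(\theta,q_n)$ is even when $q_n$ is even and odd when $q_n$ is odd. Thus (1) and (2) follow once I establish the discrepancy bound $|A(\theta)-A(0)|\le 1$, equivalently $|\omega(\theta,q_n)-\omega(q_n)|\le 2$: combined with the exact value of $\omega(q_n)=2A(0)-q_n$ supplied by Proposition \ref{lemma-qn} and the parity constraint, this forces $\omega(\theta,q_n)\in\{0,\pm2\}$ when $q_n$ is even and $\omega(\theta,q_n)\in\{\pm1,\pm3\}$ when $q_n$ is odd.

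To prove $|A(\theta)-A(0)|\le 1$ I would slide the window: for $\theta\in[0,\tfrac12)$, passing from $[0,\tfrac12)$ to $[-\theta,\tfrac12-\theta)$ gains the points of $Q_n$ in $[-\theta,0)$ and loses those in $[\tfrac12-\theta,\tfrac12)$, so $A(\theta)-A(0)=\#(Q_n\cap[-\theta,0))-\#(Q_n\cap[\tfrac12-\theta,\tfrac12))$. The key is that $Q_n$ is almost symmetric about $0$: since $q_n\alpha$ is the closest return to $0$ among $1\le i\le q_n$ and, by \eqref{2}, every other $i\alpha$ stays at distance $>\|q_{n-1}\alpha\|$ from $0$, the reflection $x\mapsto -x$ carries $Q_n$ onto a translate of itself by $\mp\|q_n\alpha\|$, and the point $r\alpha$ near $\tfrac12$ gives the same near symmetry about $\tfrac12$. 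Pairing the sweep-in arc $[-\theta,0)$ against the sweep-out arc $[\tfrac12-\theta,\tfrac12)$ through these symmetries, exactly as in the $\theta=0$ computation of Proposition \ref{lemma-qn}, shows the two counts differ by at most one, the discrepancy coming only from the single point that can fall into the exceptional band of width $\|q_n\alpha\|$ near $\tfrac12$; the case $\theta\in[\tfrac12,1)$ is symmetric. I expect the main obstacle here to be making this pairing uniform in $\theta$: for $\theta=0$ one compares full halves, whereas now one must track the two moving sub-arcs simultaneously and verify that no second exceptional point ever appears.

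For (3) I would telescope over blocks of length $q_n$. Writing $\omega(kq_n)=\sum_{j=0}^{k-1}\omega(jq_n\alpha,q_n)$, where the $j$-th block collects the indices $jq_n<i\le(j+1)q_n$ recentered at $\theta=jq_n\alpha$, each block is an instance of the quantity controlled in (2); this gives the crude bound $|\omega(kq_n)|\le 3k$, so the real work is to improve $3$ to $1$ on average. I would instead apply the reflection argument directly to the full length-$kq_n$ sum: since $kq_n<q_{n+1}$, the multiples $jq_n\alpha$ stay near $0$, $\|kq_n\alpha\|\le k\|q_n\alpha\|$, and by \eqref{2} the minimal gap between points of $\{i\alpha:1\le i\le kq_n\}$ is $\|q_n\alpha\|$. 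Pairing $i\leftrightarrow kq_n-i$ about the center, which lies within $\tfrac12\|kq_n\alpha\|$ of $0$, points in $(0,\tfrac12)$ cancel against points in $(\tfrac12,1)$ except for those lying in the exceptional band of width $\|kq_n\alpha\|$ around $\tfrac12$; by the minimal-gap estimate this band contains at most $\|kq_n\alpha\|/\|q_n\alpha\|+O(1)\le k$ points, whence $|\omega(kq_n)|\le k$. Here the hypothesis that $q_n$ is odd fixes the scale, since each block contributes essentially $\pm1$, and controls the single unpaired central index; the delicate point, and the main obstacle for (3), is to pin the exceptional count down to exactly $k$ rather than $k+O(1)$, which requires handling the half-open endpoints and the parity of $kq_n$ with care.
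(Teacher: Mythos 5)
Your reduction of (1) and (2) to the single uniform discrepancy bound $|A(\theta)-A(0)|\le 1$ is where the argument breaks. That bound is exactly what part (1) needs, but it is false in general when $q_n$ is odd: for $\omega(q_n)=1$ and $\theta\in(0,\tfrac12)$ the value $\omega(\theta,q_n)=-3$ can occur, i.e.\ $A(\theta)-A(0)=-2$ (this is precisely why the statement allows $\pm3$, and the paper's own analysis does not exclude it). The reason your ``exceptional band of width $\|q_n\alpha\|$ near $\tfrac12$'' is wrong in the odd case is structural: when $q_n=2r$ there is a point $r\alpha$ of $Q_n$ at distance $\tfrac12\|q_n\alpha\|$ from $\tfrac12$, so the mismatch between the sweep-in arc $[-\theta,0)$ and the sweep-out arc $[\tfrac12-\theta,\tfrac12)$ after applying the symmetry is an interval of length exactly $\|q_n\alpha\|$, which contains at most one point of $Q_n$; but when $q_n$ is odd, $\tfrac12$ sits inside a gap $(s_1\alpha,s_2\alpha)$ of $Q_n$ whose length can be $\|q_{n-1}\alpha\|+\|q_n\alpha\|$, the translation matching the two arcs is off by up to that amount, and the mismatch interval can contain two points of $Q_n$ (the minimal gap being $\|q_{n-1}\alpha\|$). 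Note also that weakening your lemma to $|A(\theta)-A(0)|\le 2$ does not repair (2): that would permit $\omega(\theta,q_n)=5$ when $\omega(q_n)=1$. What is actually true, and what the paper proves, is a one-sided bound ($A(\theta)-A(0)\in\{0,-1,-2\}$ for $\theta\in(0,\tfrac12)$, by symmetry the other sign for $\theta\in(\tfrac12,1)$), and your symmetric-discrepancy formulation cannot capture it. You flag ``making the pairing uniform in $\theta$'' as the main obstacle; it is not a technicality but the place where the claim fails.

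For (3), the reflection $i\mapsto kq_n-i$ (about the center $\tfrac12 k\|q_n\alpha\|$) produces \emph{two} exceptional bands of length $k\|q_n\alpha\|$, one near $\tfrac12$ and one near $0$, and you account for only the first. The band near $0$ is not negligible: it contains exactly the $k$ points $q_n\alpha,2q_n\alpha,\dots,kq_n\alpha$, all in $[0,\tfrac12)$, which pair with each other under $jq_n\leftrightarrow (k-j)q_n$ and therefore fail to cancel, already forcing a contribution of about $+k$ before the band near $\tfrac12$ (which can hold up to $k+1$ points, each failed pair there contributing $-2$) is even considered. Honest bookkeeping of your scheme yields $|\omega(kq_n)|\le 2k+O(1)$, and the $O(1)$ from unpaired central and endpoint terms cannot be absorbed, so the target $\le k$ is out of reach by this route. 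The paper's argument for (3) is different and exact: since $kq_n<q_{n+1}$, the points $\{i\alpha\}_{i=1}^{kq_n}$ organize into $q_n$ monotone runs $s\alpha<(s+q_n)\alpha<\cdots<(s+(k-1)q_n)\alpha$, one inside each gap of $Q_n$; every run except the single one whose gap contains $\tfrac12$ lies entirely in one half-circle, the full runs in the two halves cancel exactly (there being $r$ of each besides the straddling one when $q_n=2r+1$), and the surviving straddling run contributes a value in $[-k,k]$. I would recommend adopting that run decomposition rather than trying to sharpen the reflection count.
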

\begin{proof}
(1). Suppose $\omega(q_n)=0$. If $\theta=0$ or $\frac{1}{2}$, obviously $\omega(\theta,q_n)=0$. Now we treat the case for $0<\theta<\frac{1}{2}$ and the case for $\frac{1}{2}<\theta<1$ is similar. Then
\begin{align*}
  \omega(\theta,q_n)&=\#\big ((\theta+ Q_n)\cap [0,\frac{1}{2})\big)-\#\big((\theta+ Q_n)\cap [\frac{1}{2},1)\big)\\
  &=\#\bigg(Q_n\cap \big([-\theta,0)\cup [0,\frac{1}{2}-\theta)\big)\bigg)-\#\bigg(Q_n\cap \big([\frac{1}{2}-\theta,\frac{1}{2})\cup [\frac{1}{2},-\theta)\big)\bigg)
  \\
  &=\#\big(Q_n\cap [-\theta,0)\big)+\#\big(Q_n\cap [0,\frac{1}{2})\big)-\#\big(Q_n\cap [\frac{1}{2}-\theta,\frac{1}{2})\big)\\
  &\ \ \ \ -\#\big(Q_n\cap [\frac{1}{2}-\theta,\frac{1}{2})\big)-\#\big(Q_n\cap [\frac{1}{2},1)\big)+\#\big(Q_n\cap [-\theta,0)\big).
\end{align*}
Since $\omega(q_n)=\#\big(Q_n\cap [0,\frac{1}{2})\big)-\#\big(Q_n\cap [\frac{1}{2},1)\big)=0$, we have that
\begin{align}\label{eq-3-2-1}
  \omega(\theta,q_n)=2\bigg(\#\big(Q_n\cap [-\theta,0)\big)-\#\big(Q_n\cap [\frac{1}{2}-\theta,\frac{1}{2})\big)\bigg).
\end{align}
From Proposition \ref{lemma-qn}, $\omega(q_n)=0$ if and only if $q_n=2r$ for some $r$. As before we will just discuss for the case $n=2k$.
Note that in this case, $||q_n\alpha||=\{q_n\alpha\}$ and $r\alpha=\frac{1}{2}+\frac{1}{2}||{q_n\alpha}||$. Hence
$Q_n\cap[\frac{1}{2}-\frac{1}{2}||{q_n\alpha}||,\frac{1}{2}+\frac{1}{2}||{q_n\alpha}||)=\emptyset$.

For $0< \ell\le r$,  we have that
\begin{align}\label{eq-3-2-2}
  \ell\alpha\in [-\theta,0)&\Leftrightarrow (\ell+r)\alpha\in[\frac{1}{2}-\theta+\frac{1}{2}||{q_n\alpha}||, \frac{1}{2}+\frac{1}{2}||{q_n\alpha}||)\\
  &\Leftrightarrow (\ell+r)\alpha\in[\frac{1}{2}-\theta+\frac{1}{2}||{q_n\alpha}||, \frac{1}{2})\nonumber
\end{align}
For $r<\ell\le q_n$,  we have that
\begin{align}\label{eq-3-2-3}
  \ell\alpha\in [-\theta,0)&\Leftrightarrow (\ell-r)\alpha\in[\frac{1}{2}-\theta-\frac{1}{2}||{q_n\alpha}||, \frac{1}{2}-\frac{1}{2}||{q_n\alpha}||)\\
  &\Leftrightarrow (\ell-r)\alpha\in[\frac{1}{2}-\theta-\frac{1}{2}||{q_n\alpha}||, \frac{1}{2})\nonumber
\end{align}
Notice that $[\frac{1}{2}-\theta-\frac{1}{2}||{q_n\alpha}||,\frac{1}{2}-\theta+\frac{1}{2}||{q_n\alpha}||)$ contains at most one point of $Q_n$ (since the length of this interval is $||{q_n\alpha}||$).
Together with \eqref{eq-3-2-1}, \eqref{eq-3-2-2} and \eqref{eq-3-2-3}, we conclude that
\begin{align*}
   \omega(\theta,q_n)=
   \begin{cases}
     2, \text{ if }(\ell-r)\alpha\in [\frac{1}{2}-\theta-\frac{1}{2}||{q_n\alpha}||, \frac{1}{2}-\theta) \text{ for some } r<\ell\le q_n;\\
     -2, \text{ if }(\ell+r)\alpha\in [\frac{1}{2}-\theta,\frac{1}{2}-\theta+\frac{1}{2}||{q_n\alpha}||) \text{ for some } 0<\ell\le r;\\
     0, \text{ otherwise}.
   \end{cases}
\end{align*}

(2). We just consider the case for $\omega(q_n)=1$ and the case for $\omega(q_n)=-1$ is similar.

By Proposition \ref{lemma-qn}, if $\omega(q_n)=1$ then $n=2k$ and $q_n=2r+1$.

It is clear that $\omega(\theta,q_n)=1$ if $\theta=0$ and $\omega(\theta,q_n)=-1$ if $\theta=\frac{1}{2}$. Now assume $0<\theta<\frac{1}{2}$.
Similar with \eqref{eq-3-2-1},
we have
\begin{align}\label{eq-3-2-4}
  \omega(\theta,q_n)=2\bigg(\#\big(Q_n\cap [-\theta,0)\big)-\#\big(Q_n\cap [\frac{1}{2}-\theta,\frac{1}{2})\big)\bigg)+1.
\end{align}

Assume that $\frac{1}{2}\in (s_1\alpha,s_2\alpha)$, where $s_1\alpha,s_2\alpha\in Q_n$ and $(s_1\alpha,s_2\alpha)\cap Q_n=\emptyset$.
If $s_2\le q_n-q_{n-1}$, then $(s_2+q_{n-1})\alpha\in Q_n$ and no element of $Q_n$ can lie between $(s_2+q_{n-1})\alpha$ and $s_2\alpha$.
Hence $s_1=s_2+q_{n-1}$ and $|s_2\alpha-s_1\alpha|=||q_{n-1}\alpha||$.
If $s_2> q_n-q_{n-1}$, then $(s_2+q_{n-1}-q_n)\alpha\in Q_n$ and no element of $Q_n$ can lie between $(s_2+q_{n-1}-q_n)\alpha$  and $s_2\alpha$.
Hence $s_1=s_2+q_{n-1}-q_n$ and $|s_2\alpha-s_1\alpha|=||q_{n-1}\alpha||+||q_{n}\alpha||$. So $(-\theta+s_1\alpha,-\theta+s_2\alpha)$ contains at most two elements in $Q_n$.

For $0<\ell\le q_n-s_2$, since $Q_n\cap[\frac{1}{2},s_2\alpha)=\emptyset$, we have that
\begin{align}\label{eq-3-2-5}
  \ell\alpha\in [-\theta,0)&\Leftrightarrow (\ell+s_2)\alpha\in[-\theta+s_2\alpha, s_2\alpha)\\
  &\Leftrightarrow (\ell+s_2)\alpha\in[-\theta+s_2\alpha, \frac{1}{2}).\nonumber
\end{align}
For $q_n-s_2<\ell\le q_n$,  since no element of $Q_n$ lies between $s_2\alpha-||{q_n\alpha}||$ and $\frac{1}{2}$, we have that
\begin{align}\label{eq-3-2-6}
  \ell\alpha\in [-\theta,0)&\Leftrightarrow (\ell+s_2-q_n)\alpha\in[-\theta+s_2\alpha-||{q_n\alpha}||, s_2\alpha-||{q_n\alpha}||)\\
  &\Leftrightarrow (\ell+s_2-q_n)\alpha\in[-\theta+s_2\alpha-||{q_n\alpha}||, \frac{1}{2}).\nonumber
\end{align}

In the proof of Proposition \ref{lemma-qn} (the last paragraph), we have shown that $Q_n$ does not intersect with the interval $(\frac{1}{2}, \frac{1}{2}+||q_n\alpha||)$ when $q_n=2r+1$. So
$s_2\alpha-||q_n\alpha||\ge \frac{1}{2}$. Noticing that $[-\theta+\frac{1}{2},-\theta+s_2\alpha)$ contains at most two elements of $Q_n$ and
$[-\theta+\frac{1}{2},-\theta+s_2\alpha-||q_n\alpha||)$ contains at most one element of $Q_n$,
together with \eqref{eq-3-2-4}, \eqref{eq-3-2-5} and \eqref{eq-3-2-6}, we conclude that $\omega(\theta,q_n)=\pm 1\text{ or }-3$.

If $\frac{1}{2}< \theta<1$, then $\omega(\theta,q_n)=-\omega(\theta-\frac{1}{2},q_n)$, and hence $\omega(\theta,q_n)=\pm 1\text{ or }3$.

(3). If $q_n=2r+1$ and $n=2t$, i.e. $\omega(q_n)=1$, then $||q_n\alpha||=\{q_n\alpha\}$. We now order the points in $Q_n$ on $\mathbf{T}$ by
$0<q_n\alpha<\cdots<s_1\alpha<\frac{1}{2}<s_2\alpha<\cdots<q_{n-1}\alpha<1$. Noticing that $kq_n<q_{n+1}$,  hence
the points in $\{\ell\alpha\}_{\ell=1}^{kq_n}$ should be ordered as
\begin{align*}
  0&<q_n\alpha<2q_n\alpha<\cdots<kq_n\alpha\\
  &\qquad \qquad \cdots\\
  &<s_1\alpha<(s_1+q_n)\alpha<\cdots<\big(s_1+(k-1)q_n\big)\alpha\\
  &<s_2\alpha<(s_2+q_n)\alpha<\cdots<\big(s_2+(k-1)q_n\big)\alpha\\
  &\qquad \qquad \cdots\\
  &<q_{n-1}\alpha<(q_{n-1}+q_n)\alpha<\cdots<\big(q_{n-1}+(k-1)q_n\big)\alpha<1.
\end{align*}
Due to the place where $\frac{1}{2}$ lies, obviously $|\omega(kq_n)|\le k$.

Similarly, when $q_n=2r+1$ and $n=2t+1$, we also have $|\omega(kq_n)|\le k$.
\end{proof}

The following proposition gives estimation on $m(\theta,q_n)$ and $M(\theta,q_n)$.

\begin{proposition}\label{lemma-Mm}
Let $\theta\in \mathbf{T}$, then we have the following estimation.
\begin{enumerate}
\item
For $q_n<t\le q_{n+1}$, $M(\theta,t)\le M(\theta,q_n)+3\lceil\frac{t}{q_n}\rceil$.
\item
For $q_n<t\le q_{n+1}$, $m(\theta,t)\ge m(\theta,q_n)-3\lceil\frac{t}{q_n}\rceil$.
\item
$\max\{M(0,q_{n+1}),|m(0,q_{n+1})|\}\ge
|\omega(q_n)|\lfloor\frac{q_{n+1}}{6q_n}\rfloor.$
\end{enumerate}
\end{proposition}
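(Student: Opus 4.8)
The plan is to derive all three estimates from one block-decomposition identity: because $z_i(\theta)=z_{i-jq_n}(\theta+jq_n\alpha)$, a sum over a length-$q_n$ block is itself a $q_n$-sum started at a shifted phase,
\[
\sum_{i=jq_n+1}^{(j+1)q_n}z_i(\theta)=\omega(\theta+jq_n\alpha,q_n),
\]
and by Proposition \ref{lemma-kq}(1)--(2) every such block sum lies in $[-3,3]$. For (1), given $q_n<t\le q_{n+1}$ and $q_n<\ell\le t$, I would write $\ell=jq_n+s$ with $0\le s<q_n$, and split $\omega(\theta,\ell)$ into an initial partial sum plus $j$ full blocks. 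When $s\ge 1$ the initial piece $\omega(\theta,s)$ is one of the sums in $M(\theta,q_n)$, so $\omega(\theta,\ell)\le M(\theta,q_n)+3j$; when $s=0$ I peel off the first block as $\omega(\theta,q_n)\le M(\theta,q_n)$ and bound the remaining $j-1$ blocks by $3$ each. Since $j\le\lceil t/q_n\rceil$, taking the maximum over $\ell\le t$ yields (1), and using the lower bound $-3$ on each block in the same way gives (2).

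For (3) I would first pass to the skeleton sequence $a_k:=\omega(kq_n)$. Set $K=\lfloor q_{n+1}/q_n\rfloor$, so that $kq_n\le q_{n+1}$ for $1\le k\le K$. Each $a_k$ is among the partial sums governing $M(0,q_{n+1})$ and $m(0,q_{n+1})$, and $a_0=0$, so a short argument gives
\[
\max\{M(0,q_{n+1}),\,|m(0,q_{n+1})|\}\ \ge\ \max_{0\le k\le K}|a_k|.
\]
If $\omega(q_n)=0$ the right-hand side of the proposition is $0$ and there is nothing to prove, so I assume $\omega(q_n)=\pm1$, i.e.\ $q_n$ is odd, and it then suffices to show $\max_{k\le K}|a_k|\ge\lfloor q_{n+1}/(6q_n)\rfloor$.

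The heart of the matter is that $k\mapsto a_k$ drifts in one direction over long stretches. Its increments are $a_{k+1}-a_k=\omega(kq_n\alpha,q_n)\in\{\pm1,\pm3\}$ by Proposition \ref{lemma-kq}(2), so the total variation satisfies $\sum_{k=0}^{K-1}|a_{k+1}-a_k|\ge K$. The phases $kq_n\alpha\bmod 1=k\|q_n\alpha\|$ stay inside $[0,\tfrac1{q_n})\subset(0,\tfrac12)$ for $k<K$, since $K\|q_n\alpha\|\le (q_{n+1}/q_n)(1/q_{n+1})=1/q_n$ by \eqref{3}. Reading off $\omega(\theta,q_n)$ from the analysis in the proof of Proposition \ref{lemma-kq}(2), the sign of the increment changes only when the two growing windows just left of $\tfrac12$ and just left of $0$ swallow a new point of $Q_n$; by the three-distance structure the gaps of $Q_n$ equal $\|q_{n-1}\alpha\|$ and $\|q_{n-1}\alpha\|+\|q_n\alpha\|$, and $\|q_{n-1}\alpha\|\ge 1/(q_{n-1}+q_n)\ge 1/(2q_n)$, so each window of length $<1/q_n$ meets only boundedly many points of $Q_n$. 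Hence the increment keeps a fixed sign on all but boundedly many maximal runs, so $a_0,\dots,a_K$ is piecewise monotone with at most $P$ pieces for an absolute constant $P$. Since the net change over each monotone piece is at most $2\max_k|a_k|$, one gets $K\le 2P\max_k|a_k|$, i.e.\ $\max_k|a_k|\ge K/(2P)$; arranging $P=3$ produces the stated $\lfloor q_{n+1}/(6q_n)\rfloor$ (the $\omega(q_n)=-1$ case is symmetric, with phases sitting just below $1$).

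The main obstacle is precisely this final count: showing that there are at most three monotone pieces, equivalently at most two sign changes of the increment, across $0\le k\le K$. This demands a careful three-distance bookkeeping of how the windows near $\tfrac12$ and near $0$ sweep past the points of $Q_n$ as $k$ grows, and it is there that the exact constant $6$ is generated; the total-variation packaging above is only the device that converts that combinatorial count into the desired lower bound.
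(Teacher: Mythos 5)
Your treatment of (1) and (2) is essentially the paper's: the same decomposition $\ell=\ell_0+kq_n$ into an initial segment plus full $q_n$-blocks, each block being $\omega(\theta+\cdot\,,q_n)$ and hence bounded by $3$ in absolute value via Proposition \ref{lemma-kq}. Those two parts are fine.

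For (3), however, there is a genuine gap, and you name it yourself: the entire argument rests on the claim that the skeleton $a_k=\omega(kq_n)$ is piecewise monotone with at most three pieces (equivalently, that the increment $\omega(kq_n\alpha,q_n)$ changes sign at most twice) over the relevant range of $k$, and you do not prove this --- you only say it ``demands a careful three-distance bookkeeping.'' That bookkeeping \emph{is} the proof of (3); without it the total-variation packaging proves nothing, and the constant $6$ is not ``arranged'' but must be extracted from the geometry. The paper resolves exactly this point by an explicit computation: writing $s_1\alpha<\tfrac12<s_2\alpha$ for the gap of $Q_n$ containing $\tfrac12$, it shows (equations \eqref{relation-1} and \eqref{relation-1'}) that the block increments $\omega(0,(k-1)q_n+1,kq_n)$ equal $+1$ for $k\le\lfloor(\tfrac12-\{s_1\alpha\})/\|q_n\alpha\|\rfloor$ and $-1$ thereafter, for all $k$ up to $\lfloor\|q_{n-1}\alpha\|/\|q_n\alpha\|\rfloor\ge\lfloor q_{n+1}/(2q_n)\rfloor$; so the skeleton is unimodal (one sign change, not two) on that range, and in particular the increments there are exactly $\pm1$, never $\pm3$. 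The constant $6$ then falls out of a dichotomy at the threshold $\tfrac13\|q_{n-1}\alpha\|$: either the ascending phase alone lasts at least $\lfloor q_{n+1}/(6q_n)\rfloor$ steps, whence $M(0,q_{n+1})\ge\lfloor q_{n+1}/(6q_n)\rfloor$, or else the descending phase carries $\omega(\lfloor\|q_{n-1}\alpha\|/\|q_n\alpha\|\rfloor q_n)$ down to at most $-\lfloor q_{n+1}/(6q_n)\rfloor$. Note also that your generic bound $\max_k|a_k|\ge K/(2P)$ applied to the honest outcome ($P=2$ monotone pieces over a range of length about $q_{n+1}/(2q_n)$) would only give $q_{n+1}/(8q_n)$; the factor $6$ requires using the specific unimodal, slope-$\pm1$ structure rather than a pure variation count. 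So the skeleton-plus-monotonicity strategy is the right one, but the proposal stops exactly where the proof has to begin.
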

\begin{proof}
For any $0<\ell\le t$, we write $\ell=\ell_0+kq_n$, where $0\le\ell_0<q_n$ and $0\le k\le \lfloor\frac{t}{q_n}\rfloor$.
Then
\begin{align*}
\omega(\theta,\ell)&=\omega(\theta,\ell_0)+\omega(\theta,\ell_0+1,\ell_0+q_n)+\cdots+\omega(\theta,\ell_0+1+(k-1)q_n,\ell_0+kq_n) \\
&=\omega(\theta,\ell_0)+\omega(\theta+\ell_0\alpha,q_n)+\cdots+\omega(\theta+(\ell_0+(k-1)q_n)\alpha,q_n).
\end{align*}
Hence by Proposition \ref{lemma-kq},
$$m(\theta,q_n)-3k\le \omega(\theta,\ell)\le M(\theta,q_n)+3k.$$
This proves (1) and (2).

Now we prove (3). Obviously (3) is true if $\omega(q_n)=0$. Without
loss of generality, we assume that $\omega(q_n)=1$.

Suppose $s_1\alpha<\frac{1}{2}<s_2\alpha$, where $0<s_1,s_2<q_n$ and no element in $Q_n$ lies between $s_1\alpha$ and $s_2\alpha$.
From the property of $\frac{p_n}{q_n}$ (inequality \eqref{2}), $|s_1-s_2|\le q_{n-1}$ and $\|s_1\alpha-s_2\alpha\|\ge\|q_{n-1}\alpha\|$.
If $\frac{1}{2}-\{s_1\alpha\}<||q_n\alpha||$, then
\begin{equation}\label{relation-1}
    \omega(0,(k-1)q_n+1,kq_n)=
    \begin{cases}1, &\text{ if } k=1,\\
                 -1,& \text{ if } k=2, 3, \cdots, \lfloor\frac{\|q_{n-1}\alpha\|}{\|q_n\alpha\|}\rfloor.
    \end{cases}
\end{equation}
If $\frac{1}{2}-\{s_1\alpha\}\ge||q_n\alpha||$, then
\begin{equation}\label{relation-1'}
    \omega(0,(k-1)q_n+1,kq_n)=
    \begin{cases}1, &\text{ if } k=1,2,\cdots, \lfloor\frac{\frac{1}{2}-\{s_1\alpha\}}{\|q_n\alpha\|}\rfloor,\\
                 -1,& \text{ if } k=\lfloor\frac{\frac{1}{2}-\{s_1\alpha\}}{\|q_n\alpha\|}\rfloor+1, \cdots, \lfloor\frac{\|q_{n-1}\alpha\|}{\|q_n\alpha\|}\rfloor.
    \end{cases}
\end{equation}

 We also note that by inequality \eqref{3},
\begin{align*}
    \frac{\|q_{n-1}\alpha\|}{\|q_n\alpha\|}&\ge  \frac{1}{q_{n-1}(q_{n-1}+q_n)}q_nq_{n+1}\ge \lfloor\frac{q_{n+1}}{q_n}\rfloor\frac{q_n^2}{q_{n-1}(q_{n-1}+q_n)}\nonumber\\
    &\ge\frac{1}{2}\lfloor\frac{q_{n+1}}{q_n}\rfloor\ge\lfloor\frac{q_{n+1}}{2q_n}\rfloor.
\end{align*}
This implies that
\begin{equation}\label{relation-2}
\lfloor\frac{q_{n+1}}{6q_n}\rfloor \|q_n\alpha\|\le \frac{1}{3}\|q_{n-1}\alpha\|.
\end{equation}

We assume that $q_{n+1}\ge 6q_n$ (otherwise (3) already holds).

If $\frac{1}{2}-s_1\alpha>\frac{1}{3}\|q_{n-1}\alpha\|(\ge ||q_n\alpha||)$, then $\lfloor\frac{\frac{1}{2}-\{s_1\alpha\}}{||q_n\alpha||}\rfloor\ge \lfloor\frac{q_{n+1}}{6q_n}\rfloor$. By \eqref{relation-1'}, $\omega(kq_n)=k$ for any $k=1,2,\cdots, \lfloor\frac{\frac{1}{2}-\{s_1\alpha\}}{\|q_n\alpha\|}\rfloor$. So in this case $M(0,q_{n+1})\ge|\omega(q_n)|\lfloor\frac{q_{n+1}}{6q_n}\rfloor$.

If $||q_n\alpha||\le \frac{1}{2}-s_1\alpha\le\frac{1}{3}\|q_{n-1}\alpha\|$, then by \eqref{relation-1'}, $\omega(kq_n)=2\lfloor\frac{\frac{1}{2}-\{s_1\alpha\}}{\|q_n\alpha\|}\rfloor-k$ for any
$\lfloor\frac{\frac{1}{2}-\{s_1\alpha\}}{\|q_n\alpha\|}\rfloor<k\le \lfloor\frac{\|q_{n-1}\alpha\|}{\|q_n\alpha\|}\rfloor$.
So in this case
\begin{align*}
    m(0,q_{n+1})&\le \omega(\lfloor\frac{\|q_{n-1}\alpha\|}{\|q_n\alpha\|}\rfloor q_n)=2\lfloor\frac{\frac{1}{2}-\{s_1\alpha\}}{\|q_n\alpha\|}\rfloor-\lfloor\frac{\|q_{n-1}\alpha\|}{\|q_n\alpha\|}\rfloor\\
                &\le-\lfloor\frac{q_{n+1}}{6q_n}\rfloor.
\end{align*}

If $\frac{1}{2}-s_1\alpha\le\frac{1}{3}\|q_{n-1}\alpha\|$ and $\frac{1}{2}-\{s_1\alpha\}<||q_n\alpha||$, then by \eqref{relation-1}, it obviously holds that
\begin{align*}
    m(0,q_{n+1})&\le \omega(\lfloor\frac{\|q_{n-1}\alpha\|}{\|q_n\alpha\|}\rfloor q_n)=-\lfloor\frac{\|q_{n-1}\alpha\|}{\|q_n\alpha\|}\rfloor+2\\
                &\le-\lfloor\frac{q_{n+1}}{6q_n}\rfloor.
\end{align*}
\end{proof}

Denote
\begin{equation*}
  M(k)=\max_{z\in Z}M(z,k) \text{ and }m(k)=\min_{z\in Z}m(z,k).
\end{equation*}
\begin{proposition}\label{lemma-Mm2}
  For $q_n< k\le q_{n+1}$, we have
$$M(k)\le q_1+3\lceil\frac{k}{q_{n}}\rceil+3\sum_{j=2}^{n}\lceil\frac{q_j}{q_{j-1}}\rceil+2$$
and
$$m(k)\ge -q_1-3\lceil\frac{k}{q_{n}}\rceil-3\sum_{j=2}^n\lceil\frac{q_j}{q_{j-1}}\rceil-2.$$
\end{proposition}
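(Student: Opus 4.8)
The plan is to reduce this uniform bound to the single-point estimates of Proposition~\ref{lemma-Mm} and then telescope along the continued fraction hierarchy. First I would fix $\theta\in\mathbf{T}$ and an integer $k$ with $q_n<k\le q_{n+1}$, and apply part (1) of Proposition~\ref{lemma-Mm} to peel off the top layer, obtaining $M(\theta,k)\le M(\theta,q_n)+3\lceil\frac{k}{q_n}\rceil$. The remaining task is then to control $M(\theta,q_n)$ uniformly in $\theta$, which I would do by descending through the levels $q_n,q_{n-1},\dots,q_1$.

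Next I would apply part (1) of Proposition~\ref{lemma-Mm} repeatedly at each lower level. Since $q_{j-1}<q_j\le q_j$, taking $t=q_j$ in the level-$(j-1)$ estimate gives $M(\theta,q_j)\le M(\theta,q_{j-1})+3\lceil\frac{q_j}{q_{j-1}}\rceil$ for every $2\le j\le n$. Summing these telescoping inequalities yields $M(\theta,q_n)\le M(\theta,q_1)+3\sum_{j=2}^{n}\lceil\frac{q_j}{q_{j-1}}\rceil$. For the base case I would use that $\omega(\theta,\ell)$ is a sum of $\ell$ terms each equal to $\pm1$, so $|\omega(\theta,\ell)|\le\ell$ and hence $M(\theta,q_1)\le q_1$. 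Combining the three displays produces $M(\theta,k)\le q_1+3\lceil\frac{k}{q_n}\rceil+3\sum_{j=2}^{n}\lceil\frac{q_j}{q_{j-1}}\rceil$, valid uniformly in $\theta$.

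Finally, to pass from $\mathbf{T}$ to $Z$, I would invoke the comparison recorded just before the proposition: for any $z\in Z$ one has $|M(z,k)-M(\varphi(z),k)|\le 2$, so $M(z,k)$ exceeds the uniform bound for $M(\varphi(z),k)$ by at most $2$. Taking the maximum over $z\in Z$ then gives the asserted estimate for $M(k)$, the extra $+2$ being precisely this Sturmian correction. The estimate for $m(k)$ is entirely symmetric: I would replace part (1) of Proposition~\ref{lemma-Mm} by part (2) throughout and use $|m(z,k)-m(\varphi(z),k)|\le 2$.

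I do not expect a genuine obstacle here, since the argument is a straightforward telescoping. The only points needing care are verifying that the recursion terminates correctly at level $q_1$ (so that the sum runs from $j=2$ and the trivial bound $M(\theta,q_1)\le q_1$ supplies the constant term) and that part (1) of Proposition~\ref{lemma-Mm} may legitimately be applied with $t$ equal to the right endpoint $q_j$ of its range $q_{j-1}<t\le q_j$; both are immediate from the stated hypotheses.
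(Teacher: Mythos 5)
Your proposal is correct and follows essentially the same route as the paper: apply part (1) of Proposition \ref{lemma-Mm} once at the top level and then telescope down through $q_n, q_{n-1},\dots,q_1$, bound $M(\theta,q_1)$ by $q_1$, and absorb the Sturmian two-to-one discrepancy $|M(z,k)-M(\varphi(z),k)|\le 2$ into the final $+2$. The symmetric treatment of $m(k)$ via part (2) also matches the paper.
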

\begin{proof}
  By (1) of Proposition \ref{lemma-Mm}, for $q_n< k\le q_{n+1}$,
  \begin{align*}
    M(\theta,k)&\le M(\theta,q_n)+3\lceil\frac{k}{q_n}\rceil\le \cdots\\
    &\le M(\theta,q_1)+3\lceil\frac{k}{q_{n}}\rceil+3\sum_{j=2}^n\lceil\frac{q_{j}}{q_{j-1}}\rceil\\
    &\le q_1+3\lceil\frac{k}{q_{n}}\rceil+3\sum_{j=2}^n\lceil\frac{q_{j}}{q_{j-1}}\rceil.
  \end{align*}
  Hence
  \begin{align*}
    M(k)&=\max_{z\in Z}M(z,k)\le \max_{\theta\in \mathbf{T}}M(\theta,k)+2\\
    &\le q_1+3\lceil\frac{k}{q_{n}}\rceil+3\sum_{j=2}^n\lceil\frac{q_{j}}{q_{j-1}}\rceil+2.
  \end{align*}
The proof is similar for $m(k)$.
\end{proof}

\section{Topological entropy dimension}
Let $\mathfrak{X}=\mathfrak{X}_{\alpha,\mathfrak{Y}}$ be the DWRS with rotation $\alpha$ along $\mathfrak{Y}$, where $\mathfrak{Y}$ is an invertible TDS with positive entropy. In this section we will show that for any $\tau \in [0,1)$, there exists $\alpha$ such that $\mathfrak{X}$ has topological entropy dimension $\tau$.

Let $\tau \in [0,1)$ be given. For any positive integers $\{t_1,t_2,\cdots, t_k\}$, we define an irrational number $\alpha(t_1,t_2,\cdots,t_k)=[0;a_1,a_2,\cdots]\in [0,1)$ as follows:
\begin{align}\label{def-alpha}
  &a_n=t_n\text{ for }1\le n\le k;
  \begin{cases}
    a_{k+1}=a_{k+2}=2, \quad\text{ if }q_kq_{k-1} \text{ is odd},\\
    a_{k+1}=3, a_{k+2}=2,\text{ if }q_{k-1}\text{ is even and }q_k\text{ is odd},\\
    a_{k+1}=a_{k+2}=3,\quad\text{ if }q_{k}\text{ is even and }q_{k-1}\text{ is odd};
  \end{cases}\\
  &a_{n+1}=2\lfloor q_n^{\frac{\tau}{1-\tau}}\rfloor \text{ for }n>k+2.\nonumber
\end{align}
By \eqref{4}, $q_{n+1}=a_{n+1}q_{n}+q_{n-1}$ is odd for every $n\ge k$. We remark here that if $q_n$ is even then $q_{n-1}$ must be odd (otherwise $q_{n-2},\cdots, q_1, q_0$ are all even, which contradicts to the fact $q_0=1$).

Denote by $E_{\tau}$ the collection of such irrational number $\alpha(t_1,t_2,\cdots,t_k)$'s. Then $E_{\tau}$ is dense in $[0,1)$.

Let $\alpha\in E_{\tau}$ be fixed. By \eqref{4}, a simple estimation shows that for any $c>0$,
\begin{equation}\label{eq-4-1}
  \frac{q_{n+1}}{q_n}\sim a_{n+1}\sim 2q_n^{\frac{\tau}{1-\tau}}\gg n^c,
\end{equation}
when $n$ is sufficiently large.

Let $l_n=\lfloor\frac{q_{n+1}}{12q_n}\rfloor$. By \eqref{3},
\begin{align*}
  \frac{||q_{n-1}\alpha||}{||q_n\alpha||}\ge \frac{\frac{1}{q_{n-1}+q_n}}{\frac{1}{q_{n+1}}}\ge \frac{q_{n+1}}{2q_n}.
\end{align*}
Hence
\begin{align}\label{eq-4-3}
 l_n\le \lfloor\frac{||q_{n-1}\alpha||}{6||q_n\alpha||}\rfloor.
\end{align}
Moreover, by \eqref{eq-4-1}, there exists $n_0$ such that $l_n>0$ and $\sum_{i=n_0}^{n-1}l_i<l_n$ whenever $n\ge n_0$.

Now we set
\begin{equation}\label{eq-4-2}
  F=\bigcup_{n=n_0}^\infty\{q_n,2q_n,\cdots, l_nq_n\}=\{s_1<s_2<\cdots\}.
\end{equation}

\begin{lemma}\label{lemma-4-1}
$$\overline{D}(F)\ge \tau.$$
\end{lemma}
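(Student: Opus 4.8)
We must show $\overline{D}(F) \ge \tau$, where $F = \{s_1 < s_2 < \cdots\}$ is the increasing enumeration of $\bigcup_{n \ge n_0} \{q_n, 2q_n, \dots, l_n q_n\}$ and $l_n = \lfloor q_{n+1}/(12 q_n)\rfloor$. By definition $\overline{D}(F) = \limsup_{k} \frac{\log k}{\log s_k}$, so it suffices to exhibit a subsequence $k_n \to \infty$ along which $\frac{\log k_n}{\log s_{k_n}} \to \tau$ (or is eventually $\ge \tau - \eps$ for every $\eps$). The natural choice is to stop at the top of each block: let $k_n$ be the index of the largest element $l_n q_n$ contributed by level $n$, so that $s_{k_n} = l_n q_n$ and $k_n = \sum_{i=n_0}^{n} l_i$.

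**Key estimates.**
The plan is to estimate $k_n$ and $s_{k_n}$ separately and take the ratio of logarithms. For the denominator, $\log s_{k_n} = \log(l_n q_n)$, and since $l_n = \lfloor q_{n+1}/(12 q_n)\rfloor$ we have $l_n q_n \asymp q_{n+1}$, giving $\log s_{k_n} \sim \log q_{n+1}$. For the numerator, the crucial input is the condition $\sum_{i=n_0}^{n-1} l_i < l_n$ built into the choice of $n_0$: this forces the block sizes $l_i$ to grow fast enough that the final block dominates the count, so $k_n = \sum_{i=n_0}^n l_i < 2 l_n$, and trivially $k_n \ge l_n$. Hence $\log k_n \sim \log l_n \sim \log(q_{n+1}/q_n)$. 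I would then use \eqref{eq-4-1}, namely $q_{n+1}/q_n \sim 2 q_n^{\tau/(1-\tau)}$, to compute $\log l_n \sim \frac{\tau}{1-\tau} \log q_n$ and $\log q_{n+1} \sim \frac{1}{1-\tau}\log q_n$ (the latter from $\log q_{n+1} = \log q_n + \log(q_{n+1}/q_n) \sim (1 + \frac{\tau}{1-\tau})\log q_n$).

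**Taking the ratio.**
Combining these asymptotics,
\begin{align*}
\frac{\log k_n}{\log s_{k_n}} \sim \frac{\log l_n}{\log q_{n+1}} \sim \frac{\frac{\tau}{1-\tau}\log q_n}{\frac{1}{1-\tau}\log q_n} = \tau.
\end{align*}
Thus along the subsequence $(k_n)$ the ratio $\frac{\log k_n}{\log s_{k_n}}$ tends to $\tau$, which yields $\overline{D}(F) = \limsup_k \frac{\log k}{\log s_k} \ge \tau$. I would carry this out by fixing $\eps > 0$ and showing that for all large $n$ the ratio exceeds $\tau - \eps$, controlling the error terms (the $+1$ in $a_{n+1}$, the floors, the constant $12$, and the finite head $\sum_{i < n_0} l_i$) which all contribute only lower-order additive constants inside logarithms and therefore vanish in the quotient.

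**Main obstacle.**
The routine asymptotics are harmless; the one step deserving genuine care is justifying $\log k_n \sim \log l_n$ — that is, that the partial sum $\sum_{i=n_0}^n l_i$ has the same logarithmic order as its last term. This is exactly what the super-additivity-type condition $\sum_{i=n_0}^{n-1} l_i < l_n$ guarantees, and verifying that this condition genuinely holds for large $n$ (it follows from the super-fast growth $q_{n+1}/q_n \gg n^c$ in \eqref{eq-4-1}, which makes $l_n$ grow geometrically and then some) is the real content. Once the sandwich $l_n \le k_n < 2 l_n$ is in hand, everything else is bookkeeping with the continued-fraction growth rate.
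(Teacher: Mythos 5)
Your proof is correct and follows essentially the same route as the paper: evaluate $\limsup \frac{\log k}{\log s_k}$ along the subsequence where $s_k = l_n q_n$ is the top of the $n$-th block, bound the count below by $l_n$, and use \eqref{eq-4-1} to get the ratio $\frac{\tau/(1-\tau)}{\tau/(1-\tau)+1}=\tau$. One small remark: for the lower bound $\overline{D}(F)\ge\tau$ you only need $k_n \ge l_n$ (a larger numerator only helps), so the condition $\sum_{i=n_0}^{n-1} l_i < l_n$ that you single out as the main obstacle is not actually needed here — it matters elsewhere in the paper, not for this lemma.
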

\begin{proof}
From the definition of $F$,
\begin{align*}
  \overline{D}(F)&=\limsup_{n\rightarrow\infty}\frac{\log n}{\log s_n}\ge \lim_{m\rightarrow\infty}\frac{\log \sum_{i=n_0}^{m}l_i}{\log (l_mq_m)}\\
  &\ge \lim_{m\rightarrow\infty}\frac{\log l_m}{\log (l_mq_m)}=\lim_{m\rightarrow\infty}\frac{\log \lfloor\frac{q_{m+1}}{12q_m}\rfloor}{\log (\lfloor\frac{q_{m+1}}{12q_m}\rfloor q_m)}\\
  &=\lim_{m\rightarrow\infty}\frac{\log \frac{q_{m+1}}{q_m}}{\log q_{m+1}}=\frac{\frac{\tau}{1-\tau}}{\frac{\tau}{1-\tau}+1}\text{ (by \eqref{eq-4-1})}\\
  &=\tau.
\end{align*}
\end{proof}

The following lemma will be used while computing the entropy dimension.
\begin{lemma}\label{lem-recurrence}
When $n$ is large enough,
$${\bf m}\{\theta\in \mathbf{T}: \omega(\theta, iq_n)=i,
1\le i \le \lfloor\frac{||q_{n-1}\alpha||}{6||q_n\alpha||}\rfloor\}>\frac{1}{8}.$$
\end{lemma}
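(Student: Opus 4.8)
The plan is to rewrite the event as an intersection of translates of a single set and then estimate its measure by an overlap argument. Put $\beta=\|q_n\alpha\|$, $\gamma=\|q_{n-1}\alpha\|$, $L=\lfloor\gamma/(6\beta)\rfloor$ and $A=\{\theta\in\mathbf{T}:\omega(\theta,q_n)=1\}$. Since $\omega(\theta,iq_n)-\omega(\theta,(i-1)q_n)=\omega(\theta,(i-1)q_n+1,iq_n)=\omega(\theta+(i-1)q_n\alpha,q_n)$, the condition $\omega(\theta,iq_n)=i$ for all $1\le i\le L$ is equivalent to $\omega(\theta+(i-1)q_n\alpha,q_n)=1$ for all such $i$. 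I would treat the case $\omega(q_n)=1$, where $n$ is even and $q_n$ is odd, so that $q_n\alpha\equiv\beta\pmod 1$ and the event is exactly $\theta\in\bigcap_{j=0}^{L-1}(A-j\beta)$; the case $\omega(q_n)=-1$ is the mirror image (step $-\beta$), while for $q_n$ even Proposition \ref{lemma-kq}(1) forces $\omega(\cdot,q_n)$ to be even-valued, so the set is empty and the statement must be read along the (density $2/3$) sequence of $n$ with $q_n$ odd.

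Next I would record the geometry of $A$. By the three--gap theorem $Q_n$ partitions $\mathbf{T}$ into $q_n-q_{n-1}$ arcs of length $\gamma$ and $q_{n-1}$ arcs of length $\gamma+\beta$, so every gap is $\ge\gamma$ and $q_n\gamma=1-q_{n-1}\beta<1$. The step function $\omega(\cdot,q_n)$ jumps by $+2$ on $-Q_n$ and by $-2$ on $\tfrac12-Q_n$, and by Proposition \ref{lemma-kq}(2) it takes only the values $\pm1,\pm3$; together with $\omega(\theta+\tfrac12,q_n)=-\omega(\theta,q_n)$ this yields ${\bf m}\{\omega\ge1\}=\tfrac12$ and hence ${\bf m}(A)=\tfrac12-{\bf m}\{\omega=3\}$. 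Counting the transitions of $\omega$ through level $1$ shows moreover that the number $k$ of arcs making up $A$ equals $q_n$ minus the number of maximal arcs on which $\omega=3$, so in particular $k\le q_n$.

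With these in hand the measure estimate is short. Because $(L-1)\beta<\gamma/6$ is smaller than the minimal gap $\gamma$ of $Q_n$, the elementary bound ${\bf m}\big(\bigcap_{j=0}^{L-1}(A-j\beta)\big)\ge{\bf m}(A)-\sum_{j=0}^{L-2}{\bf m}\big((A-j\beta)\setminus(A-(j+1)\beta)\big)$ is efficient: by translation invariance each summand equals ${\bf m}(A\setminus(A-\beta))\le k\beta$, since each of the $k$ arcs of $A$ loses only its trailing piece of length $\le\beta$. Thus ${\bf m}\big(\bigcap_{j=0}^{L-1}(A-j\beta)\big)\ge{\bf m}(A)-(L-1)k\beta\ge\big(\tfrac12-{\bf m}\{\omega=3\}\big)-\tfrac{q_n\gamma}{6}>\tfrac13-{\bf m}\{\omega=3\}$, using $k\le q_n$, $(L-1)\beta<\gamma/6$ and $q_n\gamma<1$. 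Hence it suffices to prove ${\bf m}\{\omega=3\}<\tfrac{5}{24}$.

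The remaining inequality --- in fact ${\bf m}\{\omega=3\}\to0$, which is what makes the statement hold only for large $n$ --- is where I expect the real work to lie, and where the choice of $\alpha$ enters. The event $\omega(\theta,q_n)=3$ says the half--circle window $[-\theta,\tfrac12-\theta)$ contains $\tfrac{q_n+3}{2}$ points of $Q_n$, two more than the generic count; for a perfectly equally--spaced $Q_n$ a window of length $\tfrac12$ never contains more than $\tfrac{q_n+1}{2}$ points, so every excursion of $\omega$ to the extreme value $3$ must straddle one of the long ($\gamma+\beta$) gaps. Since $a_n\to\infty$ forces $q_{n-1}/q_n\to0$ and $\beta/\gamma\to0$, only a vanishing fraction $q_{n-1}/q_n$ of the gaps are long and the two gap--lengths are nearly equal, so there are at most $O(q_{n-1})$ such excursions, each of length $O(\gamma)$, giving ${\bf m}\{\omega=3\}=O(q_{n-1}\gamma)=O(q_{n-1}/q_n)\to0$. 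Making this heuristic rigorous is the crux; equivalently, since a covariance computation gives ${\bf m}\{\omega=3\}=\tfrac14\big(\tfrac{q_n^2}{4}-2\sum_{m=1}^{q_n-1}(q_n-m)\|m\alpha\|-\tfrac14\big)$, it amounts to a sharp discrepancy estimate for this Dedekind--type sum, and the constants $6$ and $\tfrac18$ in the statement are arranged precisely so that only this soft bound on $\{\omega=3\}$ is needed.
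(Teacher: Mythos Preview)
Your reduction of the event to $\bigcap_{j=0}^{L-1}(A-j\beta)$ with $A=\{\omega(\cdot,q_n)=1\}$ is correct, as are the intermediate structural claims ($k\le q_n$, $\mathbf{m}(A)=\tfrac12-\mathbf{m}\{\omega=3\}$, etc.). Your parenthetical about $q_n$ even is also well-taken: the paper's proof invokes $\omega(q_n)=1$ from $n$ even alone, tacitly assuming $q_n$ odd, whereas with the chosen $\alpha$ all partial quotients are odd and the parity of $q_n$ follows a period-$3$ pattern, so the set is indeed empty for one third of the $n$'s; this is harmless for the downstream applications but is a genuine oversight in the statement.

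That said, your argument has a real gap at its self-declared crux: the bound $\mathbf{m}\{\omega=3\}\to 0$ is left as a heuristic, and the proposed route via a Dedekind-type discrepancy sum is neither carried out nor the natural path. The paper proceeds quite differently and avoids this issue entirely. Rather than bounding the boundary loss $\mathbf{m}(A\setminus(A-\beta))$ abstractly and then needing to control $\mathbf{m}\{\omega=3\}$, the paper exhibits an explicit subset of the intersection. Writing $s_1\alpha$ for the point of $Q_n$ just below $\tfrac12$, it observes (Claim~1) that for every $\ell\alpha\in Q_n$ and every $0\le\theta<\|\tfrac12-s_1\alpha\|-\beta$ one has $\omega(\ell\alpha+\theta,q_n)=1$, simply because $\ell\alpha+\theta+Q_n$ is obtained from $Q_n$ by shifting each point by either $\theta$ or $\theta+\beta$, and neither shift carries any point across $0$ or $\tfrac12$. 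In the case $\|\tfrac12-s_1\alpha\|\ge\tfrac12\gamma$ this yields $\bigsqcup_{\ell=1}^{q_n}\big(\ell\alpha+[0,\tfrac14\gamma)\big)\subset\bigcap_{j=0}^{L-1}(A-j\beta)$, since $\tfrac14\gamma+L\beta<\tfrac12\gamma$, and the measure is $\ge q_n\cdot\tfrac14\gamma\ge\tfrac18$ using $\gamma\ge\tfrac{1}{2q_n}$; the complementary case is handled symmetrically via $\omega=-1$. Incidentally, Claim~1 already gives $\mathbf{m}(A)\ge q_n(\tfrac12\gamma-\beta)\to\tfrac12$ (because $q_n\gamma\to1$ and $q_n\beta\to0$), hence $\mathbf{m}\{\omega=3\}=\tfrac12-\mathbf{m}(A)\to0$; so your overlap argument can be completed, but the missing ingredient is precisely the paper's one-line geometric claim, not a discrepancy estimate.
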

\begin{proof}
We only consider the case $n=2t$ and the case $n=2t+1$ is similar. By Proposition \ref{lemma-qn},
$\omega(q_n)=1$. And we note that $\{q_n\alpha\}=||q_n\alpha||$.

Order the points in $Q_n$ on $\mathbf{T}$ by
$0<q_n\alpha<\cdots<s_1\alpha<\frac{1}{2}<s_2\alpha<\cdots<q_{n-1}\alpha<1$. Then $||s_2\alpha-s_1\alpha||\ge ||q_{n-1}\alpha||$.
We note that from \eqref{eq-4-1}, when $n$ is sufficiently large, $||q_{n-1}\alpha||\gg ||q_n\alpha||$.

Case 1. $||\frac{1}{2}-s_1\alpha||\ge \frac{1}{2}||q_{n-1}\alpha||$.

{\bf Claim 1.} For any $\ell \alpha\in Q_n$ and $0\le \theta< ||\frac{1}{2}-s_1\alpha||-||q_n\alpha||$, $\omega(\ell\alpha+\theta, q_n)=\omega(q_n)=1$.

\begin{proof}[Proof of the Claim 1.]
Notice that
  $$\ell\alpha+\theta+Q_n=\big(\theta+\ell\alpha+\{\alpha,2\alpha,\cdots, (q_n-\ell)\alpha\}\big)\bigsqcup\big(\theta+||q_n\alpha||+\{\alpha,2\alpha,\cdots,\ell\alpha\}\big).$$
Compare elements in $\ell\alpha+\theta+Q_n$ with elements in $Q_n$, we see that the first part of the right-hand side of the above equality is a translation
by $\theta$ of a subset of $Q_n$
and the second part is a translation by $\theta+||q_n\alpha||$ of a subset of $Q_n$. Since the distance of $s_1\alpha$ and $\frac{1}{2}$ is larger than $\theta+||q_n\alpha||$,
we have that $\omega(\ell\alpha+\theta, q_n)=\omega(q_n)=1$.
\end{proof}

For any $\ell \alpha\in Q_n$, $0\le \theta < \frac{1}{4}||q_{n-1}\alpha||$ and $0\le i\le \lfloor\frac{||q_{n-1}\alpha||}{4||q_n\alpha||}\rfloor-1$, it is easy to see that $\theta+i||q_n\alpha||+||q_n\alpha||<||\frac{1}{2}-s_1\alpha||$.
Hence for $1\le i\le \lfloor\frac{||q_{n-1}\alpha||}{4||q_n\alpha||}\rfloor-1$, by the Claim,
\begin{align*}
\omega(\ell\alpha+\theta, iq_n)&=\omega(\ell\alpha+\theta,q_n)+\omega(\ell\alpha+\theta+||q_n\alpha||,q_n)\\
&\qquad \qquad \qquad \ \ \ +\cdots+\omega(\ell\alpha+\theta+(i-1)||q_n\alpha||,q_n)\\
&=i.
\end{align*}

By \eqref{3}, $$||q_{n-1}\alpha||=q_{n-1}(-1)^{n-1}(\alpha-\frac{p_{n-1}}{q_{n-1}})\ge \frac{1}{q_n+q_{n-1}}> \frac{1}{2q_n}.$$

Let $n$ be large enough to make $\lfloor\frac{||q_{n-1}\alpha||}{6||q_n\alpha||}\rfloor\le \lfloor\frac{||q_{n-1}\alpha||}{4||q_n\alpha||}\rfloor-1$.
Since
$$\{\theta\in \mathbf{T}: \omega(\theta, iq_n)=i, 1\le i \le \lfloor\frac{||q_{n-1}\alpha||}{6||q_n\alpha||}\rfloor\}\supset \bigsqcup_{\ell=1}^{q_n}\big(\ell\alpha+[0,\frac{1}{4}||q_{n-1}\alpha||)\big),$$
we have that
\begin{align*}
  {\bf m}\{\theta\in \mathbf{T}: \omega(\theta, iq_n)=i,
1\le i \le \lfloor\frac{||q_{n-1}\alpha||}{6||q_n\alpha||}\rfloor\}\ge q_n\frac{1}{4}||q_{n-1}\alpha||> \frac{1}{8}.
\end{align*}

Case 2. $||\frac{1}{2}-s_1\alpha||< \frac{1}{2}||q_{n-1}\alpha||$.

{\bf Claim 2.} For any $\ell \alpha\in Q_n$ and $||\frac{1}{2}-s_1\alpha||< \theta\le ||q_{n-1}\alpha||-||q_n\alpha||$, $\omega(\ell\alpha+\theta, q_n)=-1$.

\begin{proof}[Proof of the Claim 2.]
As in the proof of Claim 1,
  $$\ell\alpha+\theta+Q_n=\big(\theta+\ell\alpha+\{\alpha,2\alpha,\cdots, (q_n-\ell)\alpha\}\big)\bigsqcup\big(\theta+||q_n\alpha||+\{\alpha,2\alpha,\cdots,\ell\alpha\}\big).$$
Then elements in $\ell\alpha+\theta+Q_n$ are obtained by translating elements in $Q_n$ by $\theta$ or $\theta+||q_n\alpha||$. We see that only one element $s_1\alpha$ is moved from
upper semi-circle to lower semi-circle. Hence $\omega(\ell\alpha+\theta, q_n)=-1$.
\end{proof}
Similar to Case 1, when $n$ is large enough, we also have
\begin{align*}
  {\bf m}\{\theta\in \mathbf{T}: \omega(\theta, iq_n)=i,
1\le i \le \lfloor\frac{||q_{n-1}\alpha||}{6||q_n\alpha||}\rfloor\}\ge q_n\frac{1}{4}||q_{n-1}\alpha||> \frac{1}{8}.
\end{align*}
\if Similar to the discussion in Case 1, when $n$ is large enough,
\begin{align*}
  &\{\theta\in \mathbf{T}: \omega(\theta, iq_n)=-i, 1\le i \le \lfloor\frac{||q_{n-1}\alpha||}{6||q_n\alpha||}\rfloor\}\\
  \supset & \bigsqcup_{\ell=1}^{q_n}\big(\ell\alpha+||\frac{1}{2}-s_1\alpha||+[0,\frac{1}{4}||q_{n-1}\alpha||)\big).
\end{align*}
Since
\begin{align*}
&\{\theta\in \mathbf{T}: \omega(\theta, iq_n)=i, 1\le i \le \lfloor\frac{||q_{n-1}\alpha||}{6||q_n\alpha||}\rfloor\}\\
  =&\{\theta\in \mathbf{T}: \omega(\theta, iq_n)=-i, 1\le i \le \lfloor\frac{||q_{n-1}\alpha||}{6||q_n\alpha||}\rfloor\}+\frac{1}{2},
\end{align*}
we also have
\begin{align*}
  {\bf m}\{\theta\in \mathbf{T}: \omega(\theta, iq_n)=i,
1\le i \le \lfloor\frac{||q_{n-1}\alpha||}{6||q_n\alpha||}\rfloor\}\ge q_n\frac{1}{4}||q_{n-1}\alpha||\ge \frac{1}{8}.
\end{align*}
\fi
\end{proof}

\begin{theorem}\label{ted}
Let $\mathfrak{X}=\mathfrak{X}_{\alpha,\mathfrak{Y}}$ be the DWRS with rotation $\alpha$ along $\mathfrak{Y}$, where $\alpha$ is defined
by \eqref{def-alpha} and $\mathfrak{Y}$ is a TDS with positive entropy.
Then the topological entropy dimension of $\mathfrak{X}$ is $\tau$.
\end{theorem}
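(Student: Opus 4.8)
The plan is to prove the two inequalities $\overline{D}(X,T)\le\tau$ and $\overline{D}(X,T)\ge\tau$ separately. Both rest on the description \eqref{6}: writing $T^i(z,y)=(\sigma^i z, S^{\omega(z,0,i-1)}y)$, the $Y$-coordinate of the orbit is obtained by reading the scenery $(Y,S)$ only at the positions $\omega(z,0,i-1)$, which lie in the window $[m(k),M(k)]$ for $0\le i\le k-1$, while the $Z$-coordinate runs through the Sturmian system, whose word complexity is linear. First I would reduce to product covers: any finite open cover of $Z\times Y$ is refined by one of the form $\mathcal U_Z\times\mathcal V$, so it suffices to treat such $\mathcal U$, and the $\bigvee_{i=0}^{k-1}T^{-i}\mathcal U$-name of $(z,y)$ is determined by the (at most $O(k)$ many) Sturmian $\mathcal U_Z$-names together with the $\mathcal V$-name of $y$ along the positions in $[m(k),M(k)]$. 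This gives the basic estimate $N(\bigvee_{i=0}^{k-1}T^{-i}\mathcal U)\le C\,(k+1)\,|\mathcal V|^{\,M(k)-m(k)+O(1)}$.

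For the upper bound I would feed Proposition \ref{lemma-Mm2} into this estimate. Writing $\beta=\tau/(1-\tau)$, the choice \eqref{def-alpha} gives $q_{n+1}\asymp q_n^{1+\beta}$ and $\sum_{j=2}^n\lceil q_j/q_{j-1}\rceil\asymp a_n\asymp q_{n-1}^{\beta}\asymp q_n^{\tau}$, while $\lceil k/q_n\rceil\lesssim q_n^{\beta}$. Hence for $q_n<k\le q_{n+1}$ one gets $M(k)-m(k)\lesssim k/q_n+q_n^{\tau}$, so $\log N(\bigvee_{i=0}^{k-1}T^{-i}\mathcal U)\lesssim k/q_n+q_n^{\tau}+\log k$. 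Plugging this into the characterization of $\overline D(T,\mathcal U)$ in Remark \ref{remark-1}(1) (in terms of $\log N$), a direct computation shows that for every $\tau'>\tau$ one has $\limsup_k k^{-\tau'}\log N(\bigvee_{i=0}^{k-1}T^{-i}\mathcal U)=0$: the term $q_n^{\tau}/k^{\tau'}\le q_n^{\tau-\tau'}\to0$, and $(k/q_n)/k^{\tau'}=k^{1-\tau'}/q_n$ is largest at $k\asymp q_{n+1}\asymp q_n^{1+\beta}$, where it is $\asymp q_n^{(1+\beta)(1-\tau')-1}\to0$ precisely because $(1+\beta)(1-\tau)-1=0$. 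Since this holds for every finite cover $\mathcal U$, we conclude $\overline D(X,T)=\sup_{\mathcal U}\overline D(T,\mathcal U)\le\tau$.

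For the lower bound I would fix a finite open cover $\mathcal V$ of $Y$ with positive entropy $h_{\mathrm{top}}(S,\mathcal V)=2\rho>0$ and put $\mathcal U=\pi_Y^{-1}(\mathcal V)$, so that on each fiber $\{z\}\times Y$ the cover $T^{-s}\mathcal U$ restricts to $S^{-\omega(z,0,s-1)}\mathcal V$. The key point is that the covering number of the whole space dominates that of any single fiber (restrict a minimal subcover to the fiber), so it suffices to exhibit one fiber along which many scenery positions are read. Lemma \ref{lem-recurrence} provides, for each large $n$, a fiber $\theta$ with $\omega(\theta,iq_n)=i$ for $1\le i\le l_n$, i.e. the block $\{q_n,2q_n,\dots,l_nq_n\}\subset F$ reads off $\approx l_n$ essentially consecutive positions; positivity of the entropy of $\mathcal V$ then forces the covering number restricted to that fiber to be $\ge e^{\rho' l_n}$ for some $\rho'>0$. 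Given $N$, letting $m$ be the last block met by $s_1,\dots,s_N$ and $k$ the number of elements used from $B_m$, I would apply this both to the complete block $B_{m-1}$ and to the partial block of length $k$ to get $\log N(\bigvee_{i=1}^N T^{-s_i}\mathcal U)\ge\rho'\max(l_{m-1},k)$. The growth condition $\sum_{i=n_0}^{n-1}l_i<l_n$ (from \eqref{eq-4-1}) yields $N<2l_{m-1}+k\le3\max(l_{m-1},k)$, whence $\frac1N\log N(\bigvee_{i=1}^N T^{-s_i}\mathcal U)\ge\rho'/3$. Thus $F$ is an entropy generating sequence for $\mathcal U$, and $\overline D(X,T)\ge\overline D(T,\mathcal U)\ge\overline D(F)\ge\tau$ by Lemma \ref{lemma-4-1}.

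The main obstacle is the lower bound, and within it the step converting positive entropy of $\mathfrak Y$ into an exponential lower bound $e^{\rho' l_n}$ on the fiberwise covering number. The positions actually read are $\omega(z,0,iq_n-1)=i+O(1)$ rather than exactly $1,\dots,l_n$, so there is a bounded jitter that must be absorbed (by thinning the block to make the read positions distinct, or by choosing $\mathcal V$ so that reading along a syndetic set of positions still carries positive entropy). The conceptual crux is the transfer from the full space to a single fiber together with the bookkeeping that, thanks to $\sum_{i<n}l_i<l_n$, at every scale $N$ either the last complete block or the current partial block already carries entropy proportional to $N$; this is exactly what makes the $\liminf$ in the definition of entropy generating sequence strictly positive.
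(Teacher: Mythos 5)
Your proposal is correct, and its upper-bound half is essentially identical to the paper's: reduce to product covers, bound the $Y$-part of the $k$-th join by reading $\mathcal V$ only on the window $[m(k),M(k)]$, control $M(k)-m(k)$ by Proposition \ref{lemma-Mm2}, and check via the arithmetic $q_{n+1}\asymp q_n^{1+\beta}$ from \eqref{eq-4-1} that $k^{-\tau'}\log N(\bigvee_{i=0}^{k-1}T^{-i}\mathcal U)\to 0$ for every $\tau'>\tau$; your computation of where $k^{1-\tau'}/q_n$ is maximized is exactly the paper's. The lower bound reaches the same conclusion by a somewhat different mechanism. The paper invokes Blanchard's theorem to produce two disjoint closed sets $A_1,A_2\subset Y$ with $h_{top}(S,\{A_1^c,A_2^c\})>0$, builds the two-element cover $\{Z\times A_1^c, Z\times A_2^c\}$, and localizes it over a small interval $I$ inside the set from Lemma \ref{lem-recurrence} so that $T^{-iq_n}\big(\varphi^{-1}((iq_n+1)\alpha+I)\times A_j\big)=\varphi^{-1}(\alpha+I)\times S^{-i}A_j$, which forces $N(\bigvee_{i=1}^{\ell}T^{-iq_n}\tilde{\mathcal U})\ge N(\bigvee_{i=1}^{\ell}S^{-i}\{A_1^c,A_2^c\})$. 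You instead restrict a minimal subcover of $\bigvee_i T^{-s_i}\pi_Y^{-1}\mathcal V$ to a single closed fiber $\{z\}\times Y$, where it becomes $\bigvee_i S^{-\omega(z,0,s_i-1)}\mathcal V$; this is valid, needs only one good fiber rather than a positive-measure set of them, and lets you work with an arbitrary finite open cover of positive cover entropy, so Blanchard's theorem is not needed. What the paper's version buys is that the same localization over $I$ (with ${\bf m}(I)>0$) is reused verbatim for the metric case in Section 5, where a single fiber would not suffice. The block bookkeeping ($\max\{l_{m-1},\ell\}\ge cN$ from $\sum_{i=n_0}^{n-1}l_i<l_n$) and the appeal to Lemma \ref{lemma-4-1} are the same in both.

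One remark on the step you flag as the main obstacle: the ``bounded jitter'' between the read positions $\omega(\theta,0,iq_n-1)$ and the quantities $\omega(\theta,iq_n)$ controlled by Lemma \ref{lem-recurrence} is not merely bounded but is an exact translation, since $\omega(\theta,0,m)=\omega(\theta-\alpha,1,m+1)$, so that $\{\theta:\omega(\theta,0,iq_n-1)=i\}=\alpha+\{\theta:\omega(\theta,iq_n)=i\}$. Choosing the fiber over a point of $\alpha+\bigcap_{i\le\ell}I_i$ (avoiding the orbits of $0$ and $\frac12$, where $\varphi$ is $2$-to-$1$) makes the read positions exactly $1,\dots,\ell$, so no thinning or special choice of $\mathcal V$ is required; this shift is exactly what the paper's $(iq_n+1)\alpha+I$ versus $\alpha+I$ is doing.
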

\begin{proof}
Suppose $\mathfrak{Y}=(Y,S)$ and $\mathfrak{X}=(Z\times Y, T)$ as defined in section 2.
Let $\mathcal{U}$ be a finite open cover of $Z$ and $\mathcal{V}$ be a
finite open cover of $Y$. For $q_n\le k\le q_{n+1}$ and
$i=1,\cdots,k$, we now partition $Z$ due to the range of $\omega(z,i)$ for $z\in Z$. Note that $m(i)\le\omega(z,i)\le M(i)$.
Let $\widetilde U_{i,\ell}=\{z\in Z: \omega(z,i)=\ell\}$,
where $m(i)\le\ell\le M(i)$. Then $\widetilde{\mathcal U_i}=\{\widetilde U_{i,m(i)}, \widetilde U_{i,m(i)+1},
\cdots,\widetilde U_{i,M(i)}\}$ is an open cover of $Z$. Note that $z\in \sigma^{i+1}U_{i,\ell}$ if and only if $\omega(\sigma^{-i}z,0,i-1)=\omega(\sigma^{-(i+1)}z,i)=\ell$.
Hence
\begin{align*}
  \bigvee_{i=1}^kT^{-i}(\mathcal U\times\mathcal V)&\prec\bigvee_{i=1}^kT^{-i}\big((\mathcal U\vee \sigma^{i+1}\widetilde{\mathcal U_i})\times \mathcal V\big)\\
&\prec\big(\bigvee_{i=1}^k\sigma^{-i}(\mathcal U\vee
\sigma^{i+1}\widetilde{\mathcal U_i})\big)\times \big(\bigvee_{j=m(k)}^{M(k)}S^{-j}\mathcal
V\big).
\end{align*}

So
\begin{align*}
&\log N\big(\bigvee_{i=1}^kT^{-i}(\mathcal U\times \mathcal V)\big) \\
\le& \log N\big((\bigvee_{i=1}^k\sigma^{-i}(\mathcal U\vee \sigma^{i+1}\widetilde{\mathcal
U_i}))\times (\bigvee_{j=m(k)}^{M(k)}S^{-j}\mathcal
V)\big) \\
\le& \log N\big(\bigvee_{i=1}^k\sigma^{-i}\mathcal U\big)+\log N\big(\bigvee_{i=1}^k\sigma(\widetilde{\mathcal U_i})\big)+\log
N\big(\bigvee_{j=m(k)}^{M(k)}S^{-j}\mathcal V\big)\\
\le&\log N\big(\bigvee_{i=1}^k\sigma^{-i}\mathcal U\big)+\log N\big(\bigvee_{i=1}^k\sigma(\widetilde{\mathcal U_i})\big)+\big(M(k)-m(k)+1\big)\log
N(\mathcal V).
\end{align*}

For the Sturmian system $(Z,\sigma)$, its sequence entropy and entropy dimension are both zero. Noticing that $\bigvee_{i=1}^k\sigma(\widetilde{\mathcal
U_i})\prec\{\text{the cover formed by the blocks of length }k+1\}$, we have that for any $\epsilon>0$,
\begin{equation*}
 \limsup_{k\rightarrow\infty}\frac{1}{k^{\tau+\epsilon}}\bigg(\log N\big(\bigvee_{i=1}^k\sigma^{-i}\mathcal U\big)+\log N\big(\bigvee_{i=1}^k\sigma(\widetilde{\mathcal U_i})\big)\bigg)=0.
\end{equation*}
For each $k$, there exists a unique $n(k)$ such that $q_{n(k)}<k\le q_{n(k)+1}$. By Proposition \ref{lemma-Mm2},
\begin{align*}
\limsup_{k\rightarrow\infty}\frac{1}{k^{\tau+\epsilon}}\big(M(k)-m(k)+1\big)
\le& \limsup_{k\rightarrow\infty}\frac{1}{k^{\tau+\epsilon}}(6\lceil\frac{k}{q_{n(k)}}\rceil+6\sum_{j=2}^n(k)\frac{q_j}{q_{j-1}}+2q_1+5)\\
\le& \limsup_{k\rightarrow\infty}\frac{6\frac{k}{q_{n(k)}}}{k^{\tau+\epsilon}}+
\limsup_{k\rightarrow\infty}\frac{6n(k)\frac{q_{n(k)}}{q_{n(k)-1}}}{k^{\tau+\epsilon}}\\
\le& 6\big(\limsup_{k\rightarrow\infty}\frac{q_{n(k)+1}^{1-\tau}}{q_{n(k)}k^{\epsilon}}+
\limsup_{k\rightarrow\infty}\frac{q_{n(k)}^{1-\tau}n(k)}{q_{n(k)-1}k^{\epsilon}}\big)=0.
\end{align*}
Hence by (1) of Remark \ref{remark-1}, $\overline{D}(X,T)\le\tau$.

If $\tau=0$, $\overline{D}(X,T)=0$ has already been proved. In the following, we will show $\overline{D}(X,T)\ge\tau$ for $\tau>0$.

Since $h_{top}(Y,S)>0$, there exist two non-empty disjoint closed sets
$A_1,A_2$ of $Y$, such that $h_{top}(S,\{A_1^c,A_2^c\})>0$ (c.f. \cite{B}).
Let $\tilde {\mathcal{U}}= \{Z\times A_1^c,Z\times A_2^c\}$, which is an
open cover of $X$.

By Lemma \ref{lem-recurrence},
when $n$ is large enough, we can take any small closed interval from $\{\theta\in \mathbf{T}: \omega(\theta, iq_n)=i,
1\le i \le \lfloor\frac{||q_{n-1}\alpha||}{6||q_n\alpha||}\rfloor\}$ and denote it by $I$. Together with \eqref{eq-4-3},
for any fixed $1\le i\le l_n$ and any point $z\in\varphi^{-1}(I)$, $\omega(z,iq_n)=i$. By \eqref{6}, for $j=1,2$,
\begin{align*}
  T^{-iq_n}\big(\varphi^{-1}((iq_n+1)\alpha+I)\times A_j\big)&=\varphi^{-1}(\alpha +I)\times S^{-i}A_j.
\end{align*}

Hence for any $1\le \ell\le l_n$,
\begin{align*}
&N\Big(\bigvee_{i=1}^\ell T^{-iq_n}\big\{Z\times A_1^c,Z\times A_2^c\big\}\Big ) \\
\ge& N\Big(\bigvee_{i=1}^\ell T^{-iq_n}\big\{\big(\varphi^{-1}((iq_n+1)\alpha+I)\times A_1\big)^c,\big(\varphi^{-1}((iq_n+1)\alpha+I)\times A_2\big)^c\big\}\Big) \\
=& N\Big(\bigvee_{i=1}^\ell \big\{\big(\varphi^{-1}(\alpha +I)\times S^{-i}A_1\big)^c,\big(\varphi^{-1}(\alpha +I)\times S^{-i}A_2\big)^c\big\}\Big) \\
=& N\Big(\bigvee_{i=1}^\ell \big\{\varphi^{-1}(\alpha +I)^c\times Y \cup \varphi^{-1}(\alpha +I)\times
S^{-i}A_1^c, \\
&\qquad \qquad \varphi^{-1}(\alpha +I)^c\times Y \cup \varphi^{-1}(\alpha +I)\times
S^{-i}A_2^c\big\}\Big) \\
=&N\Big(\bigvee_{i=1}^\ell S^{-i}\{A_1^c,A_2^c\}\Big).
\end{align*}
Now let
$F=\{s_1<s_2<\cdots\}$ as defined in \eqref{eq-4-2}. For any $k$, there exists $n(k)$ such that $q_{n(k)}\le
s_k<q_{n(k)+1}$. Assume that $s_k=\ell q_{n(k)}$. Then $k=\sum_{i=n_0}^{n(k)-1}l_i+\ell$ and $\max\{\sum_{i=n_0}^{n(k)-1}l_i, \ell\}\ge \frac{1}{2}k$.

Hence along the sequence $F$,
\begin{align*}
&\liminf_{k\rightarrow\infty}\frac{1}{k}N\Big(\bigvee_{i=1}^kT^{-s_i}\{Z\times A_1^c,Z\times A_2^c\}\Big) \\
\ge&
\liminf_{k\rightarrow\infty}\frac{1}{k}\max \Big \{N\Big(\bigvee_{i=1}^{l_{n(k)-1}}T^{-iq_{n(k)-1}}\{Z\times A_1^c,Z\times A_2^c\}\Big), \\
&\qquad\qquad\qquad\qquad\qquad N\Big(\bigvee_{i=1}^{\ell}T^{-iq_{n(k)}}\{Z\times
A_1^c,Z\times A_2^c\}\Big)\Big\}  \\
\ge&
\liminf_{k\rightarrow\infty}\frac{1}{k}\max\Big\{N\Big(\bigvee_{i=1}^{l_{n(k)-1}}S^{-i}\{A_1^c,A_2^c\}\Big), N\Big(\bigvee_{i=1}^{\ell}S^{-i}\{A_1^c,A_2^c\}\Big)\Big\} \\
=&
\liminf_{k\rightarrow\infty}\max\Big\{\frac{l_{n(k)-1}}{k}\frac{N\Big(\bigvee_{i=1}^{l_{n(k)-1}}S^{-i}\{
A_1^c,A_2^c\}\Big)}{l_{n(k)-1}},\frac{\ell}{k}\frac{N\Big(\bigvee_{i=1}^{\ell}S^{-i}\{
A_1^c,A_2^c\}\Big)}{\ell}\Big\} \\
\ge& \frac{1}{4}h_{top}(S,\{A_1^c,A_2^c\})>0.
\end{align*}

So $F$ is an entropy generating sequence of $\tilde {\mathcal{U}}$.
Noticing that $\overline{D}(F)=\tau$, we can deduce that $\overline{D}(X,T)\ge\tau$.
Hence $\overline{D}(X,T)=\tau$.
\end{proof}

\section{Metric entropy dimension}
Let $\mathfrak{X}=\mathfrak{X}_{\alpha,\mathfrak{Y}}$ be the DWRS with rotation $\alpha$ along $\mathfrak{Y}$, where $\alpha\in E_{\tau}$ is defined as in section 4 and $\mathfrak{Y}$ is a MDS with positive metric entropy. In this section, we will show that the metric entropy dimension of $\mathfrak{X}$ also equals $\tau$.

By Remark \ref{conjugate}, in this section we assume $\mathfrak{X}=(X,\mathcal{B},\mu,T)=(\mathbf{T}\times Y,\mathcal{B}_{\mathbf{T}}\otimes \mathcal{D},{\bf m}\times \nu, T)$.

Let us consider a measurable partition of $X$, say $\mathcal{P}=\{\mathbf{T}\times A,\mathbf{T}\times A^c\}$, where $\{A,A^c\}$ is a measurable partition of $Y$ with
$h_{\nu}(S,\{A,A^c\})>0$.

\begin{proposition}\label{med}
Let $F=\{s_1<s_2<\cdots\}$ be defined by \eqref{eq-4-2}. Then $F$ is an entropy generating sequence of $\mathcal{P}$. Hence by Lemma \ref{lemma-4-1},
$$\overline{D}_{\mu}(T,\mathcal{P})\ge\tau.$$
\end{proposition}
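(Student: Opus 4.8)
The plan is to show that $F$ is an entropy generating sequence of $\mathcal{P}=\{\mathbf{T}\times A,\mathbf{T}\times A^c\}$ by mimicking the topological argument in the proof of Theorem \ref{ted}, but now tracking metric entropy $H_{\mu}$ instead of covering numbers $N(\cdot)$. The key point is that the Sturmian base only ``uses up'' a sub-polynomial amount of complexity, while the fiber system $\mathfrak{Y}$ contributes genuine entropy at rate $h_{\nu}(S,\{A,A^c\})>0$ along each block $\{q_n,2q_n,\dots,l_nq_n\}$. The goal is therefore to verify
\begin{equation*}
\liminf_{k\rightarrow\infty}\frac{1}{k}H_{\mu}\Big(\bigvee_{i=1}^kT^{-s_i}\mathcal{P}\Big)>0.
\end{equation*}

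First I would recall from Lemma \ref{lem-recurrence} that for $n$ large the set $J_n=\{\theta\in\mathbf{T}:\omega(\theta,iq_n)=i,\ 1\le i\le \lfloor\frac{\|q_{n-1}\alpha\|}{6\|q_n\alpha\|}\rfloor\}$ has ${\bf m}(J_n)>\frac18$, and that $l_n\le\lfloor\frac{\|q_{n-1}\alpha\|}{6\|q_n\alpha\|}\rfloor$ by \eqref{eq-4-3}. On the product set $J_n\times Y$, formula \eqref{6} gives $T^{-iq_n}(\theta,y)=(\theta-iq_n\alpha,\,S^{-i}y)$ because $\omega(\varphi^{-1}(\theta),iq_n)=i$ there. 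The strategy is to condition on the base coordinate: restricting the refined partition $\bigvee_{i=1}^{l_n}T^{-iq_n}\mathcal{P}$ to the positive-measure set $J_n\times Y$ turns it, fiberwise, into $\bigvee_{i=1}^{l_n}S^{-i}\{A,A^c\}$ on the $Y$-factor. Using the standard inequality $H_{\mu}(\mathcal{Q})\ge {\bf m}(J_n)\,H_{\nu}\big(\bigvee_{i=1}^{l_n}S^{-i}\{A,A^c\}\big)$ obtained by conditioning on whether $\theta\in J_n$ (concavity of $H$, or the bound $H_{\mu}(\mathcal{Q})\ge \mu(B)H(\mathcal{Q}\mid B)$ for a set $B$ of the partition into $J_n\times Y$ and its complement), this yields a contribution of order $\frac18\,l_n\,h_{\nu}(S,\{A,A^c\})$.

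Next I would assemble the full sum along $F$ exactly as in Theorem \ref{ted}. For given $k$ write $s_k=\ell q_{n(k)}$ with $q_{n(k)}\le s_k<q_{n(k)+1}$, so that $k=\sum_{i=n_0}^{n(k)-1}l_i+\ell$ and $\max\{\sum_{i=n_0}^{n(k)-1}l_i,\ell\}\ge\frac12 k$. Since $H_{\mu}$ is monotone under refinement, $H_{\mu}\big(\bigvee_{i=1}^kT^{-s_i}\mathcal{P}\big)$ dominates the entropy of the subfamily coming from either the block at level $n(k)-1$ (of length $l_{n(k)-1}$) or the partial block at level $n(k)$ (of length $\ell$). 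Applying the fiber bound to whichever block carries at least $\frac12 k$ terms gives
\begin{align*}
\liminf_{k\rightarrow\infty}\frac1kH_{\mu}\Big(\bigvee_{i=1}^kT^{-s_i}\mathcal{P}\Big)
&\ge \frac18\liminf_{k\rightarrow\infty}\max\Big\{\frac{l_{n(k)-1}}{k}\frac{H_{\nu}\big(\bigvee_{i=1}^{l_{n(k)-1}}S^{-i}\{A,A^c\}\big)}{l_{n(k)-1}},\\
&\qquad\qquad\qquad\frac{\ell}{k}\frac{H_{\nu}\big(\bigvee_{i=1}^{\ell}S^{-i}\{A,A^c\}\big)}{\ell}\Big\}\\
&\ge \frac{1}{16}h_{\nu}(S,\{A,A^c\})>0.
\end{align*}
Thus $F$ is an entropy generating sequence, and since $\overline{D}(F)=\tau$ by Lemma \ref{lemma-4-1} (the statement there gives $\ge\tau$, and the reverse follows from the defining asymptotics \eqref{eq-4-1}), we conclude $\overline{D}_{\mu}(T,\mathcal{P})\ge\tau$.

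The main obstacle, and the one real difference from the topological proof, is establishing the conditioning inequality $H_{\mu}(\mathcal{Q})\ge {\bf m}(J_n)\,H_{\nu}\big(\bigvee_{i=1}^{l_n}S^{-i}\{A,A^c\}\big)$ cleanly. In the topological setting one simply had $N(\bigvee T^{-iq_n}\tilde{\mathcal U})\ge N(\bigvee S^{-i}\{A_1^c,A_2^c\})$ by an exact identification of atoms; in the measure-theoretic setting the translates $(iq_n+1)\alpha+I$ of the base interval are not literally disjoint and the walk coordinate $\omega(\theta,iq_n)=i$ holds only on $J_n$, so one must isolate the positive-measure event $\{\theta\in J_n\}$ and use that, conditioned on this event, the $T^{-iq_n}$-images of $\mathcal P$ coincide with the $S^{-i}$-images of $\{A,A^c\}$ in the fiber, independently of the base coordinate by the product structure of $\mu={\bf m}\times\nu$. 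Once this fiberwise identification is made rigorous the rest is the same bookkeeping as before, and the factor $\frac18$ (hence $\frac1{16}$ in the final bound) is harmless since we only need strict positivity of the $\liminf$.
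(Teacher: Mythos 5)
Your proposal is correct and follows essentially the same route as the paper: both use Lemma \ref{lem-recurrence} to produce the measure-$\frac18$ set on which $T^{-iq_n}$ acts as $S^{-i}$ in the fiber, reduce $H_{\mu}\big(\bigvee_{i}T^{-iq_n}\mathcal{P}\big)$ to ${\bf m}(\cdot)\,H_{\nu}\big(\bigvee_i S^{-i}\{A,A^c\}\big)$ (the paper computes this directly for the restricted collection, which amounts to your conditioning inequality), and finish with the identical $\max\{\sum l_i,\ell\}\ge\frac12 k$ bookkeeping. The only nitpick is a harmless translate: the correct good set is $J_n+\alpha$ (or $+(iq_n+1)\alpha$ per factor) rather than $J_n$ itself, since the relevant condition is $\omega(\theta,0,iq_n-1)=i$; this does not affect the measure bound.
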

\begin{proof}
For $\mathcal{A}=\{A_1,A_2,\cdots, A_k\}$, a collection of measurable subsets of $X$ (need not to be a partition),
we still denote $H_{\mu}(\mathcal{A})=\sum_{i=1}^k-\mu(A_i)\log\mu(A_i)$.

Let $W=A$ or $A^c$ and denote by $I_i=\{\theta\in\mathbf{T}: \omega(\theta,iq_n)=i\}$ and $I=\bigcap_{i=1}^\ell I_i$, then by \eqref{6},
\begin{align*}
  T^{-iq_n}(\mathbf{T}\times W)&=\bigcup_{\theta\in \mathbf{T}}(\theta-iq_n\alpha,S^{-\omega(\theta-iq_n\alpha,0,iq_n-1)}W)\\
  &\supset \{\theta\in\mathbf{T}: \omega(\theta,0,iq_n-1)=i\}\times S^{-i}W\\
  &=\big(I_i+\alpha\big)\times S^{-i}W.
\end{align*}
So
\begin{align*}
H_{\mu}(\bigvee_{i=1}^\ell T^{-iq_n}\mathcal{P})&\ge H_{\mu}\big(\bigvee_{i=1}^\ell \big(I_i+\alpha\big)\times S^{-i}\{A,A^c\}\big)\\
&=H_{\mu}\big(\bigcap_{i=1}^\ell\big(I_i+\alpha\big)\times \bigvee_{i=1}^\ell S^{-i}\{A,A^c\}\big)\\
&=-{\bf m}(I)\log{\bf m}(I)+{\bf m}(I)H_{\nu}(\bigvee_{i=1}^\ell S^{-i}\{A,A^c\})\\
&\ge \frac{1}{8}H_{\nu}(\bigvee_{i=1}^\ell S^{-i}\{A,A^c\}).
\end{align*}

For any $k$, there exists $n(k)$ such that $q_{n(k)}\le
s_k<q_{n(k)+1}$. Assume that $s_k=\ell q_{n(k)}$. Then $k=\sum_{i=1}^{n(k)-1}l_i+\ell$ and $\max\{\sum_{i=n_0}^{n(k)-1}l_i, \ell\}\ge \frac{1}{2}k$.

Hence along the sequence $F$,
\begin{align*}
&\liminf_{k\rightarrow\infty}\frac{1}{k}H_{\mu}\Big(\bigvee_{i=1}^kT^{-s_i}\mathcal{P}\Big) \\
\ge&
\liminf_{k\rightarrow\infty}\frac{1}{k}\max \Big \{ H_{\mu}\Big(\bigvee_{i=1}^{l_{n(k)-1}}T^{-iq_{n(k)-1}}\mathcal{P}\Big),  H_{\mu}\Big(\bigvee_{i=1}^{\ell}T^{-iq_{n(k)}}\mathcal{P}\Big)\Big\}  \\
\ge&
\liminf_{k\rightarrow\infty}\frac{1}{k}\max\Big\{\frac{1}{8}H_{\nu}(\bigvee_{i=1}^{l_{n(k)-1}} S^{-i}\{A,A^c\}), \frac{1}{8}H_{\nu}(\bigvee_{i=1}^\ell S^{-i}\{A,A^c\})\Big\} \\
=&
\liminf_{k\rightarrow\infty}\max\Big\{\frac{l_{n(k)-1}}{8k}\frac{H_{\nu}(\bigvee_{i=1}^{l_{n(k)-1}} S^{-i}\{A,A^c\})}{l_{n(k)-1}},
\frac{\ell}{8k}\frac{H_{\nu}(\bigvee_{i=1}^\ell S^{-i}\{A,A^c\})}{\ell}\Big\} \\
\ge& \frac{1}{32}h_{\nu}(S,\{A,A^c\})>0.
\end{align*}

So $F$ is an entropy generating sequence of $\mathcal{P}$.
\end{proof}

Since the metric entropy dimension is bounded from above by the topological entropy dimension, together with Theorem \ref{ted} and Proposition \ref{med}, we have
\begin{theorem}\label{mmed}
  The metric entropy dimension of $\mathfrak{X}$ equals $\tau$.
\end{theorem}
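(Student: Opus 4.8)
The plan is to derive two matching bounds for $\overline{D}_{\mu}(X,T)$ and conclude equality; essentially all of the substantive work has already been done in Proposition \ref{med} and Theorem \ref{ted}, so the present theorem is just the combination of those two results together with the standard comparison between metric and topological entropy dimension.

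First I would record the lower bound, which is immediate. Proposition \ref{med} shows that the single partition $\mathcal{P}=\{\mathbf{T}\times A,\mathbf{T}\times A^c\}$ admits $F$ as an entropy generating sequence and, via Lemma \ref{lemma-4-1}, yields $\overline{D}_{\mu}(T,\mathcal{P})\ge\tau$. Since $\overline{D}_{\mu}(X,T)$ is by definition the supremum of $\overline{D}_{\mu}(T,\mathcal{P}')$ over all finite measurable partitions $\mathcal{P}'$ of $X$, this one partition already forces $\overline{D}_{\mu}(X,T)\ge\tau$.

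For the upper bound I would invoke the general fact that the metric entropy dimension is dominated by the topological one, that is $\overline{D}_{\mu}(X,T)\le\overline{D}(X,T)$, applied to the underlying TDS $(Z\times Y,T)$ carrying $\mu\times\nu$. To apply Theorem \ref{ted} I must verify its hypothesis on the fiber system: the ordinary variational principle gives $h_{top}(Y,S)\ge h_{\nu}(S)>0$, so the underlying TDS $(Y,S)$ has positive topological entropy, and hence Theorem \ref{ted} delivers $\overline{D}(Z\times Y,T)=\tau$. By Remark \ref{conjugate} the measure-preserving systems on $\mathbf{T}\times Y$ and on $Z\times Y$ are conjugate, so the metric entropy dimension computed on the latter coincides with the one for $\mathfrak{X}$ in its $\mathbf{T}\times Y$ presentation. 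Combining the two bounds gives $\tau\le\overline{D}_{\mu}(X,T)\le\tau$, hence $\overline{D}_{\mu}(X,T)=\tau$.

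There is no genuine remaining obstacle: the delicate recurrence estimates of Section 3 and the combinatorial entropy computation of Theorem \ref{ted} carry all the weight. The only points that deserve a moment's care are (i) confirming that the inequality $\overline{D}_{\mu}(X,T)\le\overline{D}(X,T)$ is legitimate—this is the easy half of the comparison and survives even though, as noted in the introduction, the full variational principle for entropy dimension fails; and (ii) ensuring that the positive \emph{metric} entropy hypothesis on $\mathfrak{Y}$ transfers to the positive \emph{topological} entropy hypothesis needed to invoke Theorem \ref{ted}, which is exactly what the variational principle for ordinary entropy provides.
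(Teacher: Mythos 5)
Your proposal is correct and follows exactly the paper's route: the lower bound $\overline{D}_{\mu}(X,T)\ge\tau$ comes from Proposition \ref{med} together with Lemma \ref{lemma-4-1}, and the upper bound from the general inequality $\overline{D}_{\mu}(X,T)\le\overline{D}(X,T)$ combined with Theorem \ref{ted}. The paper states this in one line; your extra care in checking that positive metric entropy of $\mathfrak{Y}$ yields the positive topological entropy hypothesis of Theorem \ref{ted} via the variational principle is a detail the paper leaves implicit, not a different argument.
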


\begin{remark}
\begin{enumerate}
  \item
  Take an increasing sequence $\{\tau_n\}$ such that $\lim_{n\rightarrow\infty}\tau_n=1$. In the construction of the irrational number $\alpha=\alpha(t_1,t_2,\cdots,t_k)= [0;a_1,a_2,\cdots]$ (see \eqref{def-alpha}), when $n>k+2$, modify $a_{n+1}$ by $a_{n+1}=2\lfloor q_n^{\frac{\tau_n}{1-\tau_n}}\rfloor.$
By similar proofs of Lemma \ref{lemma-4-1} and Proposition \ref{med}, we can show that the metric entropy dimension of $\mathfrak{X}$ equals $1$ for such $\alpha$'s. Hence the topological entropy dimension is also $1$.
\item For $\tau\in [0,1)$, since $E_{\tau}$ is dense in $[0,1)$, by Theorem \ref{ted} and Theorem \ref{mmed}, the collection of irrational $\alpha$'s such that the system $\mathfrak{X}$ has entropy dimension $\tau$ is dense.
\end{enumerate}
\end{remark}

{\bf Acknowledgements}
We are grateful to the referee for the careful reading and many valuable comments to improve the paper. The first author is supported by NNSF of China (Grant No. 11790274, 11401220 and 11431012). The second author is supported in part by NRF 2010-0020946.

\end{document}